\documentclass[10pt]{amsart}
\usepackage[matrix,arrow]{xy}
\usepackage{amssymb}
\usepackage{amsmath}
\usepackage{amsfonts}
\usepackage{epsfig}
\usepackage{color}
\usepackage{epstopdf}
\usepackage{graphicx}
\usepackage{amsthm}
\usepackage{enumerate}
\usepackage[mathscr]{eucal}
\usepackage{verbatim}
\usepackage{bm}

\usepackage[bookmarks=true,hyperindex,pdftex,colorlinks,citecolor=blue,
linkcolor=blue,urlcolor=blue]{hyperref}
\usepackage{tikz}
\usetikzlibrary{matrix,arrows.meta}


\headheight=8pt   \topmargin=20pt \textheight=624pt
\textwidth=432pt \oddsidemargin=20pt \evensidemargin=20pt


\setcounter{MaxMatrixCols}{30} 
\theoremstyle{plain}
\newtheorem{theorem}{Theorem}[section]
\newtheorem{cor}[theorem]{Corollary}
\newtheorem{prop}[theorem]{Proposition}
\newtheorem{lemma}[theorem]{Lemma}

\theoremstyle{definition}

\newtheorem{example}[theorem]{Example}

\newtheorem{question}[theorem]{Question}

\newtheorem{remark}[theorem]{Remark}
\newtheorem{definition}[theorem]{Definition}


\renewcommand{\subset}{\subseteq}


\title[Weak minimizing property on pairs of classical Banach spaces]
{Weak minimizing property on pairs of classical Banach spaces}

\author[M.~Han]{Manwook Han}
\address[M.~Han]{Department of Mathematics, Chungbuk National University, Cheongju, Chungbuk 28644, Republic of Korea\newline
	\href{https://orcid.org/0009-0000-7250-1814}{ORCID: \texttt{0009-0000-7250-1814}  }}
\email{\texttt{mwhan0828@gmail.com}}

\keywords{Banach space, minimum modulus, minimum modulus-attaining operator}
\subjclass[2010]{Primary: 46B04; Secondary: 46B20, 46B25}

\begin{document}

\begin{abstract}
We investigate the minimum modulus analogue of the weak maximizing property, termed the \emph{weak minimizing property}. We establish that the pairs $(\ell_p, L^p[0, 1])$ for $2 \leq p < \infty$ and $(\ell_s \oplus_q \ell_q, \ell_r \oplus_p \ell_p)$ for $1 < p \leq r\leq s \leq q < \infty$ satisfy the weak minimizing property. Conversely, we prove that the pairs $(\ell_1, \ell_p)$, $(\ell_1, c_0)$, $(\ell_1, \ell_1)$ and $(c_0, \ell_p)$ fail to satisfy the weak minimizing property.
\end{abstract}

\maketitle

\section{Introduction}

The seminal work of Bishop and Phelps \cite{BP} demonstrated that every bounded linear functional can be approximated by norm-attaining functionals. Subsequently, Lindenstrauss revealed that the Bishop-Phelps theorem no longer holds for vector-valued bounded linear operators \cite{Lind}. At the same time, he showed that it does hold when the domain is a reflexive Banach space, and posed two significant questions as follows.
\begin{enumerate}
    \item[A.] For which domain spaces does the vector-valued version of the Bishop-Phelps theorem always hold?
    \item[B.] For which range spaces does the vector-valued version of the Bishop-Phelps theorem always hold?
\end{enumerate}
These questions spurred the interest of many mathematicians in the norm-attaining operator theory, making it a fruitful topic in functional analysis for the subsequent decades.

While considerable progress has been made on question A, notably by Huff and Bourgain in \cite{Huff, Bour}, advancements concerning question B have been comparatively modest. Gowers's result, demonstrating that the classical sequence space $\ell_p$ ($1<p<\infty$) is not a range space described in question B \cite{Gow}, is particularly noteworthy. However, it remains unknown whether every finite-dimensional Banach space is that kind of range space or not.
Beyond linear operators, the theory of norm-attaining maps has been extended to various settings --- for instance, to bilinear mappings \cite{AAP, Choi}, homogeneous polynomials \cite{ArGM, AGM} and Lipschitz maps \cite{KMS, CCGMR}.

Another noteworthy direction of research concerns the \textbf{minimum modulus}.
Let $X$ and $Y$ be Banach spaces, and denote by $\mathcal{L}(X,Y)$ the space of bounded linear operators from $X$ to $Y$.
The minimum modulus of operator $T\in\mathcal{L}(X,Y)$ is defined as
$$m(T)=\inf\{\|Tx\|:x\in S_X\},$$
where $S_X$ denotes the unit sphere of $X$.
We say that $T$ \textbf{attains its minimum modulus} if there exists an element $x\in S_X$ such that $\|Tx\|=m(T)$.
In this case, $T$ is said to be a \textbf{minimum modulus-attaining operator}, in analogy with norm-attaining operators.

Chakraborty demonstrated that if the domain of a bounded linear operator has the Radon-Nikod\'ym property, then the operator can be approximated by minimum modulus-attaining operators \cite{Cha}.
In the same article, the author stated the \textbf{weak minimizing property} (WmP) which is the minimum modulus analogue of the \textbf{weak maximizing property} (WMP) introduced in \cite{AGPT}. Recall that a pair of Banach spaces $(X,Y)$ has the WMP if for every bounded linear operator with a non-weakly null maximizing sequence attains its norm. Here, a maximizing sequence $(x_n)_n$ of operator $T$ means that $\|Tx_n\|\to\|T\|$ and contained in $S_X$.

The literature on the WMP is now substantial and contains several notable contributions in norm-attaining operator theory. Pellegrino and Teixeira produced the first nontrivial examples of pairs with the WMP, namely $(\ell_p,\ell_q)$ for $1<p<\infty$ and $1\leq q<\infty$ \cite{PT}. Aron, Garc\'ia, Pellegrino and Teixeira proved that every pair of Hilbert spaces has the WMP \cite{AGPT}. Dantas, Jung, and Mart\'inez-Cervantes showed that a Banach space $Y$ has the Schur property if and only if the pair $(X,Y)$ has the WMP for every reflexive Banach space $X$ \cite{DJM}. Recently, a sufficient condition for the WMP has been obtained in \cite{HK}. In particular, it is proved that $(\ell_p,L_p[0,1])$ for $2\leq p<\infty$ has the WMP.

The WMP can be viewed as a property stronger than the denseness of norm-attaining operators, since it implies that the set of operators which do not attain their norm is contained in a nowhere-dense subset of $\mathcal{L}(X,Y)$.
In fact, if a pair of Banach spaces $(X,Y)$ satisfies the WMP, then it satisfies the compact perturbation property (CPP).
Note that a pair $(X,Y)$ is said to have the CPP if for given operator $T\in\mathcal{L}(X,Y)$ that does not attain its norm it holds that
$$\inf_{K\in\mathcal{K}(X,Y)}\|T+K\|=\|T\|.$$
Here, $\mathcal{K}(X,Y)$ denotes the space of compact operators from $X$ to $Y$. 
On the other hand, the CPP does not imply the WMP in general \cite{GMA}.

The main aim of the present paper is to investigate the minimum modulus version of the WMP.
\begin{definition}
    Let $X$ and $Y$ be Banach spaces.
    \begin{itemize}
        \item For $T\in\mathcal{L}(X,Y)$, a sequence $(x_n)_n$ in $S_X$ is said to \textbf{minimize} $T$ if $\|Tx_n\|\to m(T)$. In this case, $(x_n)_n$ is called a \textbf{minimizing sequence} for $T$. 
        \item A pair $(X,Y)$ is said to have the \textbf{weak minimizing property} (WmP) if every bounded linear operator that admits a non-weakly null minimizing sequence attains its minimum modulus.
    \end{itemize}
\end{definition}

We first present the known results.

\begin{remark}\label{Cha}\cite{Cha} \leavevmode\vspace{0.3em} 
    \begin{itemize} 
        \item Every pair with the WmP also satisfies the \textbf{compact perturbation property for minimum modulus} (CPPm), that is, every non-minimum modulus-attaining operator satisfies 
        $$\sup_{K\in\mathcal{K}(X,Y)}m(T+K)=m(T).$$
        \item Every pair of Hilbert spaces satisfies the WmP. 
        \item For $1<p<\infty$ and $1\leq q<\infty$, $(\ell_p,\ell_q)$ has the WmP. 
        \item If either $X$ or $Y$ is finite-dimensional, then $(X,Y)$ has the WmP. 
    \end{itemize} 
\end{remark}

As the CPP implies the denseness of norm-attaining operators, the CPPm implies the denseness of minimum modulus-attaining operators. 
Indeed, if a pair $(X,Y)$ of Banach spaces satisfies the CPPm, then for every operator $T\in\mathcal{L}(X,Y)$, it holds that either $m(T)=0$ or $m(T)>0$. 
If $m(T)=0$, then for each $\varepsilon>0$, there exists $x\in S_X$ such that $\|Tx\|<\varepsilon$. By the Hahn-Banach theorem, we can find $x^*\in S_{X^*}$ such that $x^*(x)=1$. 
Hence, $\|x^*\otimes Tx\|<\varepsilon$, and consequently the operator $S=T-x^*\otimes Tx$ satisfies $\|T-S\|\leq\varepsilon$ and $Sx=0=m(S)$. 
If $m(T)>0$, then for each $\varepsilon>0$, there exist $x\in S_X$ and $x^*\in S_{X^*}$ such that $\|Tx\|<m(T)+\delta$ for $\delta:=\min\left\{\varepsilon,\frac{1}{2}m(T)\right\}$ and $x^*(x)=1$. Define $S=T-x^*\otimes Tx \frac{\delta}{\|Tx\|}$. Then $\|Sx\|<m(T)$ and therefore, by the CPPm, the operator $S$ attains its minimum modulus. Moreover, $\|T-S\|\leq\delta\leq\varepsilon$.

In Section \ref{section2}, we adapt the approach of \cite{HK} in a certain restricted framework to establish sufficient conditions for the WmP.
These conditions are then applied to identify new classes of pairs of Banach spaces satisfying the WmP.

In Section \ref{section3}, we construct several examples illustrating the failure of the WmP and the CPPm. 
We also show that, in general, the WMP does not imply the WmP.
Moreover, we present a simple example that possesses the CPPm but fails to satisfy the WmP.

\section{Weak minimizing property and property $(m)$}\label{section2}

Before turning to the main results, we recall a few notions.

\begin{definition}[\cite{KW,K1}]
    Let $X$ and $Y$ be Banach spaces. 
    \begin{itemize}
        \item An operator $T\in\mathcal{L}(X,Y)$ is said to satisfy \textbf{property $(M)$} if for all $x\in X$, $y\in Y$ with $\|y\|\leq\|x\|$ and for every weakly null sequence $(x_n)_n$ in $X$, one has
        $$\limsup_n\|y+Tx_n\|\leq\limsup_n\|x+x_n\|.$$
        \item The space $X$ is said to have \textbf{property $(M)$} if the identity operator $\mathrm{id}_X$ on $X$ satisfies property $(M)$.
        \item The pair $(X,Y)$ is said to satisfy \textbf{property $(M)$} if every operator $T\in\mathcal{L}(X,Y)$ with operator norm at most $1$ satisfies property $(M)$.
    \end{itemize}
\end{definition}

Property $(M)$ was introduced by Kalton and Werner as a necessary condition for $\mathcal{K}(X,Y)$ to be an $M$-ideal in $\mathcal{L}(X,Y)$ \cite{K1}. 
Note that the pair $(X,X)$ satisfies property $(M)$ if and only if $X$ itself has property $(M)$. 

The {Opial property} was introduced by Opial in the study of weak convergence of nonexpansive mappings \cite{Op}. Although it has a different origin from property $(M)$, it also concerns the asymptotic behavior of the norm. This concept has been extended to operators between Banach spaces, leading to {property $(O)$} \cite{HK}.

\begin{definition}
    Let $X$ and $Y$ be Banach spaces.
    \begin{itemize}
        \item An operator $T\in \mathcal{L}(X,Y)$ is said to satisfy \textbf{property $(O)$} if for every nonzero $x\in X$ and every weakly null sequence $(x_n)_n$ in $X$, one has
        $$\limsup_n \|Tx_n\| < \limsup_n \|x+x_n\|.$$
        \item The space $X$ is said to have the \textbf{Opial property} if the identity operator $\mathrm{id}_X$ on $X$ satisfies property $(O)$.
        \item The pair $(X,Y)$ is said to satisfy \textbf{property $(O)$} if every operator $T\in\mathcal{L}(X,Y)$ with operator norm at most $1$ satisfies property $(O)$.
    \end{itemize}
\end{definition}

Analogously, the pair $(X,X)$ satisfies property $(O)$ if and only if $X$ has the Opial property. 
It is worth mentioning that Van Dulst showed that every separable Banach space can be renormed to have the Opial property \cite{Dul}.

In \cite{HK}, the authors established the following connection between these properties and the WMP, and gave new examples with the WMP. The following theorem provides a primary motivation for the present study.

\begin{theorem}\label{HKthm}\cite{HK}
    For Banach spaces $X$ and $Y$, if $X$ is reflexive and the pair $(X,Y)$ satisfies properties $(M)$ and $(O)$ simultaneously, then $(X,Y)$ has the WMP.
\end{theorem}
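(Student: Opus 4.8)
The plan is to show directly that any such operator attains its norm, by locating a norming unit vector as the normalized weak limit of a maximizing sequence. Let $T\in\mathcal{L}(X,Y)$ be nonzero and admit a non-weakly-null maximizing sequence; after dividing by $\|T\|$ I may assume $\|T\|=1$, which is legitimate since properties $(M)$ and $(O)$ are imposed on all operators of norm at most $1$. Because the sequence $(x_n)_n\subset S_X$ is not weakly null, there are $f\in X^*$ and $\varepsilon>0$ with $|f(x_n)|\ge\varepsilon$ along a subsequence; reflexivity of $X$ then lets me pass to a further subsequence with $x_n\rightharpoonup x$, and the limit satisfies $|f(x)|\ge\varepsilon$, so $x\neq 0$. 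Put $z_n:=x_n-x$, a weakly null sequence. By weak lower semicontinuity of the norm, $0<\|x\|\le\liminf_n\|x_n\|=1$, and $\limsup_n\|x+z_n\|=\lim_n\|x_n\|=1$. The goal becomes to prove $\|Tx\|=\|x\|$; granting this, $x/\|x\|\in S_X$ and $\|T(x/\|x\|)\|=1=\|T\|$, so $T$ attains its norm.

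Next I would extract the two consequences of the hypotheses that drive the argument. First, property $(O)$ applied to $T$ and the nonzero vector $x$ gives $\limsup_n\|Tz_n\|<\limsup_n\|x+z_n\|=1$. Second, property $(M)$ applied to $T$ with the admissible pair $(x,Tx)$ (admissible because $\|Tx\|\le\|x\|$) gives $\limsup_n\|Tx+Tz_n\|\le\limsup_n\|x+z_n\|=1$; since $Tx+Tz_n=Tx_n$ and $\|Tx_n\|\to 1$, this inequality is in fact an equality, so $\limsup_n\|Tx+Tz_n\|=1$.

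The crux is to upgrade this to $\|Tx\|=\|x\|$. Writing $c:=\|Tx\|/\|x\|\in[0,1]$ and applying property $(M)$ once more to $T$, now with the optimally scaled comparison vector $u_0:=c\,x$ and $v:=Tx$ (so that $\|v\|=\|u_0\|$), I obtain from the equality above
\[
1=\limsup_n\|Tx+Tz_n\|\le\limsup_n\|u_0+z_n\|.
\]
Using the convex decomposition $u_0+z_n=c\,x_n+(1-c)z_n$ together with $\|x_n\|=1$ yields $\limsup_n\|u_0+z_n\|\le c+(1-c)\limsup_n\|z_n\|$, and hence $1\le c+(1-c)\limsup_n\|z_n\|$. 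Consequently, \emph{provided} $\limsup_n\|z_n\|<1$, this forces $c\ge 1$, i.e.\ $\|Tx\|\ge\|x\|$; combined with $\|Tx\|\le\|x\|$ this gives $\|Tx\|=\|x\|$, completing the proof.

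The main obstacle is thus the single remaining strict inequality $\limsup_n\|z_n\|<1$, an Opial-type bound on the weakly null part $z_n$ relative to $x\neq 0$, given $\limsup_n\|x+z_n\|=1$. This is exactly the kind of strictness that property $(O)$ is designed to furnish: in the model case $X=Y$, $T=\mathrm{id}$, property $(M)$ (with the pair $(x,0)$) yields only the non-strict estimate $\limsup_n\|z_n\|\le\limsup_n\|x+z_n\|$, while property $(O)$ is precisely the strict Opial property $\limsup_n\|z_n\|<\limsup_n\|x+z_n\|$. The delicate point in the general pair setting is that property $(O)$ as stated controls $\limsup_n\|Tz_n\|$ rather than $\limsup_n\|z_n\|$, so an additional argument is needed to feed the strictness back to the domain side and secure $\limsup_n\|z_n\|<1$; this is the step I expect to demand the most care, and it is also what excludes pathological domains (such as non-reflexive $c_0$-type spaces, where a weakly null sequence can carry unit norm while $\|x+z_n\|$ stays equal to $1$).
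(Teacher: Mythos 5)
Your setup is sound and matches the standard scheme: normalizing $\|T\|=1$, using reflexivity and non-weak-nullity to extract a weak limit $x\neq 0$, reducing norm attainment to the identity $\|Tx\|=\|x\|$, and the two correct applications of the hypotheses giving $\limsup_n\|Tz_n\|<1$ (property $(O)$) and $\limsup_n\|Tx+Tz_n\|=1$ (property $(M)$ plus maximization). But the final step has a genuine gap, which you yourself flag: your convex decomposition $cx+z_n=c\,x_n+(1-c)z_n$ is performed on the \emph{domain} side, so closing the argument requires the strict bound $\limsup_n\|z_n\|<1$. This is an Opial-type inequality for $X$ itself, and nothing in the hypotheses supplies it: pairwise properties $(M)$ and $(O)$ only bound \emph{range} expressions ($\limsup_n\|y+Tz_n\|$ and $\limsup_n\|Tz_n\|$) from above by domain expressions, never the reverse, so no combination of them can upper-bound the purely domain quantity $\limsup_n\|z_n\|$. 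In effect you would need the Opial property of $X$, which is not assumed; for instance $(L_p[0,1],\mathbb{R})$ with $p\neq 2$ satisfies $(M)$ and $(O)$ (every functional kills weakly null sequences), yet $L_p[0,1]$ is reflexive and fails the Opial property. So the argument stalls exactly at its crux, and that crux is the whole content of the theorem.

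The repair is to perform the convex split on the range side, which is precisely the mechanism of the paper's minimum-modulus analogue (Lemma \ref{strictm} feeding Theorem \ref{Main}), with domain and range roles mirrored. Suppose $T$ does not attain its norm, so $\|Tx\|<\|x\|$. Choose $\lambda\in(0,1)$ with $\frac{1}{\lambda}\|Tx\|\leq\|x\|$ and write
$$Tx_n \;=\; Tx+Tz_n \;=\; \lambda\left(\tfrac{1}{\lambda}Tx+Tz_n\right)+(1-\lambda)\,Tz_n .$$
Property $(M)$ applied to the admissible pair $\bigl(x,\tfrac{1}{\lambda}Tx\bigr)$ gives $\limsup_n\bigl\|\tfrac{1}{\lambda}Tx+Tz_n\bigr\|\leq\limsup_n\|x+z_n\|=1$, and property $(O)$ gives $\limsup_n\|Tz_n\|<1$; hence $\limsup_n\|Tx_n\|\leq \lambda\cdot 1+(1-\lambda)\limsup_n\|Tz_n\|<1$, contradicting $\|Tx_n\|\to 1$. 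This yields $\|Tx\|=\|x\|$ with no control of $\limsup_n\|z_n\|$ ever needed. Note the asymmetry your attempt ran into: in the paper's Lemma \ref{strictm}, properties $(m)$ and $(o)$ bound domain expressions by range expressions, so there the split is legitimately done on the domain side; for the WMP the inequalities run the other way, so the split must be done on the range side.
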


Our objective in this section is to observe the analogous connection to find new examples of pairs of Banach spaces satisfying the WmP. To achieve this, we proceed to establish sufficient conditions for the WmP based on the notions of {properties $(m)$ and $(o)$}, and then apply these criteria to specific examples.

\begin{definition}
    Let $X$ and $Y$ be Banach spaces. 
    \begin{itemize}
        \item An operator $T\in\mathcal{L}(X,Y)$ is said to satisfy \textbf{property $(m)$} if for every $x\in X$, $y\in Y$ with $\|x\|\leq\|y\|$ and for every weakly null sequence $(x_n)_n$ in $X$, one has
        $$\limsup_n\|x+x_n\|\leq\limsup_n\|y+Tx_n\|.$$
        \item The pair $(X,Y)$ is said to satisfy \textbf{property $(m)$} if every operator $T\in\mathcal{L}(X,Y)$ with minimum modulus at least $1$ satisfies property $(m)$.
        \item An operator $T\in \mathcal{L}(X,Y)$ is said to satisfy \textbf{property $(o)$} if for every nonzero $y\in Y$ and every weakly null sequence $(x_n)_n$ in $X$, one has
        $$\limsup_n \|x_n\| < \limsup_n \|y+Tx_n\|.$$
        \item The pair $(X,Y)$ is said to satisfy \textbf{property $(o)$} if every operator $T\in\mathcal{L}(X,Y)$ with minimum modulus at least $1$ satisfies property $(o)$.
    \end{itemize}
\end{definition}

\begin{lemma}\label{strictm}
    Let $X$ and $Y$ be Banach spaces. For all $x\in X$, $y\in Y$ with $\|x\|<\|y\|$, for every operator $T\in\mathcal{L}(X,Y)$ with $m(T)\geq1$ and for every weakly null sequence $(x_n)_n$ in $X$, it satisfies
    $$\limsup_n\|x+x_n\|<\limsup_n\|y+Tx_n\|$$
    if and only if the pair $(X,Y)$ satisfies properties $(o)$ and $(m)$ simultaneously.
\end{lemma}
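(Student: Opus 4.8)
The plan is to prove the two implications separately, abbreviating the displayed strict inequality as condition $(\star)$. Throughout, fix an operator $T$ with $m(T)\ge 1$ and a weakly null sequence $(x_n)_n$ in $X$ (both $(\star)$ and properties $(m),(o)$ are universally quantified over such $T$, so establishing the pointwise statement for an arbitrary one suffices). It is convenient to introduce the two auxiliary functionals $\psi(u)=\limsup_n\|u+x_n\|$ on $X$ and $\phi(v)=\limsup_n\|v+Tx_n\|$ on $Y$. Since a weakly null sequence is bounded, both are finite, and the triangle inequality together with the subadditivity of $\limsup$ shows that $\psi$ and $\phi$ are convex. In this language, property $(m)$ reads ``$\|u\|\le\|v\|\Rightarrow\psi(u)\le\phi(v)$'', property $(o)$ reads ``$v\ne0\Rightarrow\psi(0)<\phi(v)$'', and $(\star)$ reads ``$\|u\|<\|v\|\Rightarrow\psi(u)<\phi(v)$''.

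For the implication $(\star)\Rightarrow(o)\wedge(m)$, property $(o)$ follows at once by taking $u=0$, since $\|0\|<\|v\|$ whenever $v\ne 0$. For property $(m)$ the case $\|u\|<\|v\|$ is immediate from $(\star)$; the boundary case $\|u\|=\|v\|>0$ is obtained by applying $(\star)$ to the pair $(tu,v)$ for $t\in(0,1)$ and letting $t\to 1^-$, using $\psi(u)\le\psi(tu)+(1-t)\|u\|$ to pass from the scaled inequality to $\psi(u)\le\phi(v)$; and the degenerate case $u=v=0$ follows directly from $m(T)\ge 1$, which forces $\|Tx_n\|\ge\|x_n\|$ and hence $\psi(0)\le\phi(0)$.

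For the converse $(o)\wedge(m)\Rightarrow(\star)$, which I expect to be the main obstacle, the difficulty is that property $(m)$ only provides a non-strict estimate, so the strictness must be imported from $(o)$. Given $\|u\|<\|v\|$, I would choose $\theta\in(\|u\|/\|v\|,1)$ so that $\|u/\theta\|<\|v\|$. Writing $u=(1-\theta)\cdot 0+\theta\cdot(u/\theta)$ and using convexity of $\psi$ gives $\psi(u)\le(1-\theta)\psi(0)+\theta\,\psi(u/\theta)$. Property $(m)$ then yields $\psi(u/\theta)\le\phi(v)$, while property $(o)$ yields the strict bound $\psi(0)<\phi(v)$, valid because $v\ne0$. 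Combining these and using $1-\theta>0$ gives $\psi(u)<(1-\theta)\phi(v)+\theta\phi(v)=\phi(v)$, which is precisely $(\star)$.

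The one genuinely delicate step is the convexity argument: it is exactly the device that lets the single strict inequality furnished by $(o)$ at the point $0$ propagate to a strict inequality at an arbitrary $u$ with $\|u\|<\|v\|$, with the homogeneity of the norm supplying the admissible scaling factor $\theta$. Everything else reduces to routine manipulation of $\limsup$'s and of norm estimates, and no passage to subsequences is required since all comparisons are already stated between $\limsup$'s.
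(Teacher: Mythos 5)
Your proposal is correct and takes essentially the same route as the paper: your ``if'' direction is exactly the paper's argument, since the convexity step $\psi(u)\le(1-\theta)\psi(0)+\theta\,\psi(u/\theta)$ is precisely the paper's decomposition $\limsup_n\|x+x_n\|\le\lambda\limsup_n\|\tfrac{1}{\lambda}x+x_n\|+(1-\lambda)\limsup_n\|x_n\|$ with $\lambda=\theta$, followed by applying property $(m)$ to the scaled term and the strict bound from property $(o)$ to the remainder. The only difference is that the paper dismisses the ``only if'' direction as clear, whereas you spell out the boundary case $\|u\|=\|v\|>0$ and the degenerate case $u=v=0$; this is additional detail, not a different method.
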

\begin{proof}
    The `only if' part is clear. For the `if' part, assume $(X,Y)$ satisfies both properties $(m)$ and $(o)$. 
    Let $T\in\mathcal{L}(X,Y)$ be an operator with $m(T)\geq 1$ and let $x\in X$, $y\in Y$ be elements with $\|x\|<\|y\|$, and $(x_n)_n$ be a weakly null sequence in $X$.
    For $\lambda<1$ such that $\frac{1}{\lambda}\|x\|<\|y\|$, it follows that
    $$\limsup_n \left\| \frac{1}{\lambda} x + x_n \right\| \leq \limsup_n \|y+Tx_n\|\quad \text{by property $(m)$},$$
    and
    $$\limsup_n \|x_n\| < \limsup_n \|y+Tx_n\| \quad \text{by property $(o)$.}$$
    Hence we have
    $$\limsup_n \|x+x_n\| \leq \lambda \limsup_n \left\|\frac{1}{\lambda}x+x_n\right\| + (1-\lambda) \limsup_n \|x_n\| < \limsup_n \|y+Tx_n\|,$$
    as required.
\end{proof}

We next establish the promised sufficient condition for the WmP.

\begin{theorem}\label{Main}
    If a pair $(X,Y)$ of Banach spaces has properties $(m)$ and $(o)$ simultaneously and $X$ is reflexive, then $(X,Y)$ has the WmP.
\end{theorem}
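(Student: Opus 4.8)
The plan is to take an operator $T\in\mathcal{L}(X,Y)$ admitting a non-weakly null minimizing sequence $(x_n)_n\subseteq S_X$ with $\|Tx_n\|\to m(T)$, and to produce an explicit point of $S_X$ at which $T$ attains its minimum modulus. The first step exploits reflexivity to extract a weak limit. Since $(x_n)_n$ is not weakly null, there are $x^*\in X^*$, $\varepsilon>0$ and a subsequence along which $|x^*(x_{n_k})|\geq\varepsilon$; reflexivity lets me pass to a further weakly convergent subsequence (which I relabel $(x_n)_n$), say $x_n\rightharpoonup x_0$, and continuity of $x^*$ then forces $|x^*(x_0)|\geq\varepsilon$, so that $x_0\neq 0$. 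Throughout, $\|x_n\|=1$ and $\|Tx_n\|\to m(T)$ persist, and $z_n:=x_n-x_0$ is weakly null. The goal reduces to showing $\|Tx_0\|=m(T)\,\|x_0\|$, for then $x_0/\|x_0\|\in S_X$ is a minimizer.

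I would dispose of the degenerate case $m(T)=0$ first, where properties $(m)$ and $(o)$ are unavailable (they only constrain operators of minimum modulus at least $1$). Here $\|Tx_n\|\to 0$ gives $Tx_n\to 0$ in norm, hence weakly, while weak-to-weak continuity of $T$ gives $Tx_n\rightharpoonup Tx_0$; uniqueness of weak limits yields $Tx_0=0=m(T)\,\|x_0\|$, as desired.

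For the main case $m(T)>0$, I would normalize by replacing $T$ with $\tilde T:=T/m(T)$, which satisfies $m(\tilde T)=1$ and for which $(x_n)_n$ remains a non-weakly null minimizing sequence with the same weak limit $x_0$; attainment for $\tilde T$ is equivalent to attainment for $T$. Since $m(\tilde T)=1$ forces $\|\tilde Tx_0\|\geq\|x_0\|$, it suffices to rule out strict inequality. Supposing $\|x_0\|<\|\tilde Tx_0\|$, I apply Lemma \ref{strictm} — applicable because $(X,Y)$ has both $(m)$ and $(o)$ and $m(\tilde T)\geq 1$ — with the choice $x=x_0$, $y=\tilde Tx_0$ and the weakly null sequence $(z_n)_n$, obtaining
$$\limsup_n\|x_0+z_n\|<\limsup_n\|\tilde Tx_0+\tilde Tz_n\|.$$
Since $x_0+z_n=x_n$ and $\tilde Tx_0+\tilde Tz_n=\tilde Tx_n$, the left side equals $1$ and the right side equals $\lim_n\|\tilde Tx_n\|=1$, the contradiction $1<1$. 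Hence $\|\tilde Tx_0\|=\|x_0\|$, and $\tilde T$, therefore $T$, attains its minimum modulus at $x_0/\|x_0\|$.

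I expect the only genuinely delicate point to be the role of $x_0$ together with the normalization: one must resist arguing by weak lower semicontinuity of the norm alone, which yields merely $\|\tilde Tx_0\|\leq 1$ and $\|x_0\|\leq 1$ — consistent with $\|\tilde Tx_0\|\neq\|x_0\|$ when $\|x_0\|<1$ — so the strict separation supplied by Lemma \ref{strictm} is essential to pin down the upper bound $\|\tilde Tx_0\|\leq\|x_0\|$. It is also worth emphasizing that $x_0$ need not lie on $S_X$; all that the argument requires is $x_0\neq 0$ and the exact equality $\|\tilde Tx_0\|=\|x_0\|$, from which homogeneity does the rest.
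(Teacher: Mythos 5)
Your proposal is correct and follows essentially the same route as the paper's proof: extract a nonzero weak limit via reflexivity, dispose of the case $m(T)=0$ by weak-to-weak continuity, normalize to minimum modulus one, and apply Lemma \ref{strictm} to the weak limit $x_0$, its image, and the weakly null sequence $(x_n-x_0)_n$ to reach the contradiction $1<1$. The only differences are cosmetic (you normalize before, rather than after, the unit-modulus argument, and you spell out why the weak limit is nonzero and why $\|\tilde T x_0\|\geq\|x_0\|$, which the paper leaves implicit).
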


\begin{proof}
    For $T\in \mathcal{L}(X,Y)$, let $(x_n)_n \subset S_X$ be a non-weakly null minimizing sequence for $T$. Passing to a subsequence if necessary, assume that $x_n$ converges weakly to a nonzero element $x\in B_X$. If $m(T)=0$, then the sequence $(Tx_n)_n$ converges to $0$. 
    Since $Tx_n\to Tx$ in the weak topology, it follows that $Tx=0$. Hence,
    $$T\left(\frac{x}{\|x\|}\right)=m(T)=0.$$
    If $m(T)=1$, define a weakly null sequence $(w_n)_n$ by $w_n = x_n - x$ for each $n\in\mathbb{N}$. Suppose $T$ does not attain its minimum modulus.
    Then $\|Tx\| > \|x\|$. 
    Therefore,
    $$1 = \limsup_{n} \|Tx_n\| = \limsup_n \|Tx + Tw_n\| > \limsup_n \|x + w_n\| = \limsup_n \|x_n\| = 1,$$
    where the inequality follows from Lemma \ref{strictm}, leading to a contradiction.

    Lastly, if $m(T)\notin \{0,1\}$, set $S = \frac{T}{m(T)}$.
    By the previous argument, $S$ attains its minimum modulus, and hence $T$ also attains its minimum modulus.
\end{proof}

Next, we investigate the properties $(m)$ and $(o)$, to construct examples of pairs of Banach spaces satisfying the WmP.

\begin{prop}\label{BasicFacts}
For Banach spaces $X$ and $Y$,
    \begin{enumerate}
        \item[(1)] If $X$ and $Y$ have property $(M)$, then $(X,Y)$ has property $(m)$.
        \item[(2)] If $Y$ has the Opial property, then $(X,Y)$ has property $(o)$.
        \item[(3)] If $X$ has the Opial property and $Y$ has property $(M)$, then $(X,Y)$ has property $(o)$.
        \item[(4)] If $X$ has the Opial property and $(X,Y)$ has property $(m)$, then $(X,Y)$ has property $(o)$.
    \end{enumerate}
\end{prop}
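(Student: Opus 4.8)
The plan is to prove each of the four implications by pushing the relevant limit superior through the operator, relying on the single structural fact forced by the hypothesis $m(T)\geq 1$: every $T\in\mathcal{L}(X,Y)$ with $m(T)\geq 1$ satisfies $\|Tv\|\geq\|v\|$ for all $v\in X$, so that $\|Tz+Tx_n\|=\|T(z+x_n)\|\geq\|z+x_n\|$ for every $z$ and, in particular, $\limsup_n\|x_n\|\leq\limsup_n\|Tx_n\|$. Throughout, $(x_n)_n$ denotes a weakly null sequence in $X$, and hence $(Tx_n)_n$ is weakly null in $Y$.

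I would treat the three statements (2), (3), (4) about property $(o)$ first, as they are the most direct. For (2), apply the Opial property of $Y$ to the nonzero vector $y$ and to the weakly null sequence $(Tx_n)_n$ to get $\limsup_n\|Tx_n\|<\limsup_n\|y+Tx_n\|$; combined with $\limsup_n\|x_n\|\leq\limsup_n\|Tx_n\|$ this is exactly property $(o)$. For (4), use the assumed property $(m)$ of the pair: choose a nonzero $z\in X$ with $\|z\|\leq\|y\|$, so that property $(m)$ gives $\limsup_n\|z+x_n\|\leq\limsup_n\|y+Tx_n\|$, while the Opial property of $X$ gives $\limsup_n\|x_n\|<\limsup_n\|z+x_n\|$; chaining these yields property $(o)$. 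Part (3) proceeds similarly but uses property $(M)$ of $Y$ to make $s\mapsto\psi(s):=\limsup_n\|w+Tx_n\|$ (for any $\|w\|=s$) a well-defined nondecreasing function: pick a nonzero $z\in X$ with $\|Tz\|\leq\|y\|$, use $\|Tz+Tx_n\|\geq\|z+x_n\|$ together with the Opial property of $X$ to obtain $\psi(\|Tz\|)>\limsup_n\|x_n\|$, and then let the monotonicity of $\psi$ raise this to $\psi(\|y\|)=\limsup_n\|y+Tx_n\|$.

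For (1) I would first reduce to a comparison of profile functions. Property $(M)$ of $X$ makes $\limsup_n\|x+x_n\|$ depend only on $\|x\|$, defining a nondecreasing $1$-Lipschitz function $\phi$, and property $(M)$ of $Y$ defines $\psi$ as above. Because $\psi$ is nondecreasing, for an operator with $m(T)\geq 1$ property $(m)$ is equivalent to the equal-norm inequalities $\phi(r)\leq\psi(r)$ for all $r\geq 0$. The basic estimate $\|z+x_n\|\leq\|Tz+Tx_n\|$ gives $\phi(\|z\|)\leq\psi(\|Tz\|)$ for every $z$. If $m(T)=1$ this finishes the proof: taking unit $u$ with $\|Tu\|\leq 1+\varepsilon$ and $z=ru$ yields $\phi(r)\leq\psi\big((1+\varepsilon)r\big)$, and letting $\varepsilon\to 0$ and using the continuity of $\psi$ gives $\phi(r)\leq\psi(r)$.

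The main obstacle is the case $m(T)>1$. Then every direction is expanded by a factor at least $m(T)$, so $\phi(\|z\|)\leq\psi(\|Tz\|)$ only yields $\phi(r)\leq\psi(m(T)\,r)$, and the monotonicity of $\psi$ runs the wrong way to bring this back to $\phi(r)\leq\psi(r)$. Overcoming this requires more than the pointwise bound $\|Tv\|\geq\|v\|$ together with monotonicity; one has to compare the asymptotic shapes of $\phi$ and $\psi$ and show they are compatible. The route I would pursue is to exploit that property $(M)$ constrains these profiles to an asymptotically $\ell_p$- or $c_0$-type form and that the bounded-belowness of $T$ forces the asymptotic behaviour of $(Tx_n)_n$ in $Y$ to dominate that of $(x_n)_n$ in $X$ in precisely the way needed to secure $\phi(r)\leq\psi(r)$. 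Establishing this compatibility is the technical heart of part (1).
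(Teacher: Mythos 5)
Your proofs of parts (2), (3) and (4) are correct and are essentially the paper's own arguments: each combines the relevant Opial strict inequality with the pointwise bound $\|Tv\|\geq m(T)\|v\|\geq\|v\|$ and, where needed, one application of property $(M)$ of $Y$ (in (3)) or of property $(m)$ of the pair (in (4)) used as a monotonicity statement; your profile function $\psi$ in (3) is only a repackaging of this. Likewise, your treatment of the case $m(T)=1$ in part (1) matches the paper: choose an almost-minimizing unit vector $u$, compare $x$ with $z=ru$ via property $(M)$ of $X$, push through $T$ using $\|z+x_n\|\leq\|Tz+Tx_n\|$, apply property $(M)$ of $Y$, and let $\varepsilon\to 0$.

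The case $m(T)>1$ in part (1) is, however, a genuine gap: you identify the obstruction correctly (monotonicity of $\psi$ runs the wrong way) but do not close it, and the route you sketch rests on a premise that property $(M)$ does not supply. Property $(M)$ asserts only that $\limsup_n\|x+x_n\|$ depends on $x$ through $\|x\|$; it does not force the profile to have an asymptotically $\ell_p$- or $c_0$-type form --- those forms are exactly the strictly stronger properties $(m_p)$ and $(m_\infty)$ that the paper introduces separately. So ``comparing asymptotic shapes'' is not available as a tool here, and no such structure theory is needed anyway.

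The missing step is elementary and uses only convexity together with the symmetry contained in property $(M)$ of $Y$. Set $L=T/m(T)$ and $\lambda=1/m(T)\in(0,1)$. Since $m(L)=1$, the case already proved gives $\limsup_n\|x+x_n\|\leq\limsup_n\|y+Lx_n\|$. Now write
$$y+Lx_n=\frac{1+\lambda}{2}\,(y+Tx_n)+\frac{1-\lambda}{2}\,(y-Tx_n),$$
so that $\limsup_n\|y+Lx_n\|\leq\max\left\{\limsup_n\|y+Tx_n\|,\ \limsup_n\|y-Tx_n\|\right\}$. Finally, property $(M)$ of $Y$, applied to the vectors $\pm y$ (which have equal norms) and the weakly null sequence $(Tx_n)_n$, yields $\limsup_n\|y-Tx_n\|=\limsup_n\|{-y}+Tx_n\|=\limsup_n\|y+Tx_n\|$, so the maximum collapses and $\limsup_n\|x+x_n\|\leq\limsup_n\|y+Tx_n\|$, as required. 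In your language: the function $t\mapsto\limsup_n\|y+tTx_n\|$ is convex and, by property $(M)$ of $Y$, even, hence nondecreasing on $[0,\infty)$; what you were missing is this radial monotonicity in the scaling of the \emph{sequence}, not monotonicity in the anchor vector $y$.
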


\begin{proof}
(1) Suppose both $X$ and $Y$ have property $(M)$, and fix an operator $T\in\mathcal{L}(X,Y)$ with $m(T)\geq1$, elements $x\in X$ and $y\in Y$ with $\|x\|\leq \|y\|$ and a weakly null sequence $(x_n)_n$. 

If $m(T)=1$, for any $\varepsilon>0$, there exists $z\in X$ such that $\|x\|=\|z\|$ and $\|Tz\|\leq (1+\varepsilon)\|y\|$. Then we observe
$$\limsup_n \|x+x_n\| = \limsup_n \|z+x_n\| \leq \limsup_n \|Tz+Tx_n\| \leq \limsup_n \|(1+\varepsilon)y+Tx_n\|.$$
Since $\varepsilon$ is arbitrary, we have 
$$\limsup_n\|x+x_n\|\leq\limsup_n\|y+Tx_n\|.$$
If $m(T)>1$, put $L=\frac{T}{m(T)}$. Since $m(L)=1$, we see that
\begin{align*}
\limsup_n\|x+x_n\|&\leq\limsup_n\|y+Lx_n\|\\
&\leq \max\{ \limsup_n \|y+Tx_n\|, \limsup_n \|y-Tx_n\| \} \\
&= \limsup_n \|y+Tx_n\|.
\end{align*}
as desired.

(2) Suppose $Y$ has the Opial property. Fix an operator $T\in\mathcal{L}(X,Y)$ with $m(T)\geq1$, a nonzero element $y\in Y$ and a weakly null sequence $(x_n)_n$ in $X$. Then we see that
$$\limsup_n \|x_n\| \leq \limsup_n \|Tx_n\| < \limsup_n \|y+Tx_n\|.$$

(3) Suppose $X$ has the Opial property and $Y$ has property $(M)$. Fix an operator $T\in\mathcal{L}(X,Y)$ with $m(T)\geq1$, a nonzero element $y\in Y$ and a weakly null sequence $(x_n)_n$. Then there exists a nonzero element $x\in X$ with $\|Tx\|\leq \|y\|$, consequently
$$\limsup_n \|x_n\| < \limsup_n \|x+x_n\| \leq \limsup_n \|Tx+Tx_n\| \leq \limsup_n \|y+Tx_n\|.$$

(4) Suppose $X$ has the Opial property and the pair $(X,Y)$ has property $(m)$. For an operator $T\in\mathcal{L}(X,Y)$ with $m(T)\geq1$, a nonzero element $y\in Y$ and a weakly null sequence $(x_n)_n$, there is a nonzero element $x\in X$ with $\|x\|\leq \|y\|$. It holds that
$$\limsup_n \|x_n\| < \limsup_n \|x+x_n\| \leq \limsup_n \|y+Tx_n\|,$$
as desired.
\end{proof}

We now apply this general principle to classical sequence spaces. 
It is well known that for $1\leq p<\infty$, the space $\ell_p$ satisfies both the Opial property and property $(M)$, while $c_0$ satisfies property $(M)$.
In fact, this follows from the equalities
\begin{equation}\label{eq:lp}
    \limsup_n\|x+x_n\|=\left(\|x\|^p+\limsup_n\|x_n\|^p\right)^{\frac{1}{p}},\quad\text{if}~X=\ell_p,
\end{equation}
and
\begin{equation}\label{eq:c0}
\limsup_n\|x+x_n\|=\max\left\{\|x\|,\limsup_n\|x_n\|\right\},\quad\text{if}~X=c_0,
\end{equation}
for any $x\in X$ and any weakly null sequence $(x_n)_n$ in $X$.
We say that a Banach space $X$ has property $(m_p)$ if it satisfies (\ref{eq:lp}) for some $1\leq p<\infty$, and property $(m_\infty)$ if it satisfies (\ref{eq:c0}).

The following Corollary extends \cite[Theorem 2.9]{Cha}.
\begin{cor}
    Let $X$ be a reflexive Banach space and $Y$ a Banach space.
    If $X$ has property $(m_p)$ and $Y$ has property $(m_q)$ for some $1\leq p,q\leq\infty$ with at least one of $p$ and $q$ finite, then the pair $(X,Y)$ has the WmP. 
    In particular, let $(X_i)_i$ and $(Y_i)_i$ be sequences of finite-dimensional Banach spaces, and define
    $$X_r=\left[\bigoplus_{i=1}^\infty X_i\right]_{\ell_r},\quad Y_s=\left[\bigoplus_{i=1}^\infty Y_i\right]_{\ell_s},\quad\text{and}\quad Y_0=\left[\bigoplus_{i=1}^\infty Y_i\right]_{c_0}.$$
    Then both pairs $(X_r,Y_s)$ and $(X_r,Y_0)$ satisfy the WmP for $1<r,s<\infty$.
\end{cor}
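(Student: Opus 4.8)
The plan is to derive the first assertion directly from Theorem~\ref{Main} by checking that the hypotheses on $(m_p)$ and $(m_q)$ force the pair $(X,Y)$ to enjoy both property $(m)$ and property $(o)$. The first observation I would record is that property $(m_p)$ (for any $1\le p<\infty$) and property $(m_\infty)$ each imply that the underlying space has property $(M)$: if $\|y\|\le\|x\|$, then substituting the relevant formula, either (\ref{eq:lp}) or (\ref{eq:c0}), for $\limsup_n\|y+x_n\|$ and $\limsup_n\|x+x_n\|$ and using monotonicity of $t\mapsto(t^p+c)^{1/p}$ (respectively of the maximum) in the first slot yields the required inequality. I would likewise note that property $(m_p)$ with $p<\infty$ implies the Opial property, since for nonzero $x$ the formula (\ref{eq:lp}) gives $\limsup_n\|x+x_n\|=(\|x\|^p+\limsup_n\|x_n\|^p)^{1/p}>\limsup_n\|x_n\|$, the inequality being strict precisely because $\|x\|>0$ and $p$ is finite. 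Property $(m_\infty)$, by contrast, does not produce the Opial property, which is exactly why the hypothesis requires at least one of $p,q$ to be finite.

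With these facts in hand the verification is short. Since both $X$ and $Y$ have property $(M)$, Proposition~\ref{BasicFacts}(1) shows that $(X,Y)$ has property $(m)$. For property $(o)$ I would split on which index is finite: if $q<\infty$ then $Y$ has the Opial property and Proposition~\ref{BasicFacts}(2) applies, while if $p<\infty$ then $X$ has the Opial property and, as $Y$ has property $(M)$, Proposition~\ref{BasicFacts}(3) applies (alternatively Proposition~\ref{BasicFacts}(4), using the property $(m)$ just obtained). Since at least one of $p,q$ is finite, one of these cases occurs, so $(X,Y)$ has property $(o)$; as $X$ is reflexive, Theorem~\ref{Main} then yields the WmP.

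For the ``in particular'' statement the two ingredients are reflexivity and the identification of the relevant property $(m_\bullet)$. For $1<r<\infty$ the space $X_r=[\bigoplus_i X_i]_{\ell_r}$ is an $\ell_r$-sum of finite-dimensional, hence reflexive, spaces and is therefore reflexive. It then remains to verify that $X_r$ has property $(m_r)$, that $Y_s$ has property $(m_s)$, and that $Y_0$ has property $(m_\infty)$; granting this, the first part applies to both $(X_r,Y_s)$ (both indices finite) and $(X_r,Y_0)$ (the index $r$ finite), giving the WmP in each case.

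The main work is concentrated in this last verification. To prove, say, that $X_r$ has property $(m_r)$, I would use that a weakly null sequence $(x_n)_n$ in $[\bigoplus_i X_i]_{\ell_r}$ converges to $0$ in each finite block: the projection onto the first $k$ coordinate blocks is a finite-rank operator, hence sends $(x_n)_n$ to a norm-null sequence there. Consequently, for a fixed $x$, whose $\ell_r$-mass is essentially carried by finitely many blocks, the overlap between $x$ and $x_n$ becomes asymptotically negligible, so that $\|x+x_n\|^r=\sum_i\|x(i)+x_n(i)\|^r$ splits, up to a vanishing error, into $\|x\|^r+\|x_n\|^r$; taking $\limsup_n$ reproduces (\ref{eq:lp}). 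The same block-disjointness argument, with the maximum replacing the sum, yields (\ref{eq:c0}) for $Y_0$. This asymptotic block-disjointness of weakly null sequences against a fixed vector is the only genuinely quantitative step; everything else is the bookkeeping assembled above.
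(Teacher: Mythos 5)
Your proposal is correct and follows essentially the same route as the paper: the paper's (very terse) proof likewise just notes that $X_r$, $Y_s$, $Y_0$ have properties $(m_r)$, $(m_s)$, $(m_\infty)$ and invokes Proposition~\ref{BasicFacts} together with Theorem~\ref{Main}. You have merely filled in the details the paper leaves implicit --- that $(m_p)$/$(m_\infty)$ yield property $(M)$ and (when the index is finite) the Opial property, the block-projection argument showing the sums satisfy the relevant $(m_\bullet)$ equality, and the reflexivity of $X_r$ --- all of which are accurate.
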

\begin{proof}
    Note that the spaces $X_r$, $Y_s$ and $Y_0$ satisfy properties $(m_r)$, $(m_s)$ and $(m_\infty)$ respectively. 
    Hence, the result follows directly from Theorem \ref{Main} together with Proposition \ref{BasicFacts}.
\end{proof}

We next extend these ideas to settings involving $L_p$ spaces. In \cite[Proposition 2.5]{KW}, it was shown that $\mathcal{K}(\ell_p,L_p[0,1])$ is an M-ideal in $\mathcal{L}(\ell_p,L_p[0,1])$ for $2\leq p<\infty$ by establishing property $(M)$ for $(\ell_p,L_p[0,1])$. We now prove that this pair also satisfies property $(m)$, and hence has the WmP.

\begin{lemma}\label{lpminM}
    For $1<p<\infty$ and a Banach space $Y$, the pair $(\ell_p,Y)$ has property $(m)$ if and only if for every $y\in Y$ and every operator $T\in\mathcal{L}(\ell_p,Y)$ it holds that
    $$\limsup_n \|y+Te_n\| \geq \left( \|y\|^p + m(T)^p \right)^{\frac{1}{p}},$$    
    where $(e_n)$ denotes the canonical basis of $\ell_p$.
\end{lemma}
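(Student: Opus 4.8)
The plan is to prove both implications directly, exploiting the fact that in $\ell_p$ the asymptotic norm along weakly null sequences is governed exactly by property $(m_p)$, i.e.\ equation \eqref{eq:lp}.

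For the `only if' direction, I would fix $y\in Y$ and an \emph{arbitrary} $T\in\mathcal{L}(\ell_p,Y)$ and test property $(m)$ against the canonical basis $(e_n)$, which is weakly null with $\limsup_n\|e_n\|=1$. When $m(T)=0$ the claimed inequality reduces to $\limsup_n\|y+Te_n\|\geq\|y\|$, which holds because $(Te_n)$ is weakly null, so $y+Te_n\rightharpoonup y$ and the norm is weakly lower semicontinuous. When $m(T)>0$, I normalise $S=T/m(T)$ (so $m(S)=1$) and apply property $(m)$ to $S$ with the vectors $x\in\ell_p$ chosen so that $\|x\|=\|y\|/m(T)$ and $y/m(T)\in Y$, which satisfy $\|x\|\leq\|y/m(T)\|$. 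Using \eqref{eq:lp} we have $\limsup_n\|x+e_n\|=(\|x\|^p+1)^{1/p}$, while $\limsup_n\|y/m(T)+Se_n\|=\tfrac{1}{m(T)}\limsup_n\|y+Te_n\|$; after clearing the factor $1/m(T)$ the inequality furnished by property $(m)$ rearranges to exactly the stated bound.

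For the `if' direction, fix $T$ with $m(T)\geq1$, vectors $x,y$ with $\|x\|\leq\|y\|$, and a weakly null sequence $(x_n)$, and set $L=\limsup_n\|x_n\|$. By \eqref{eq:lp}, $\limsup_n\|x+x_n\|=(\|x\|^p+L^p)^{1/p}\leq(\|y\|^p+L^p)^{1/p}$, so it suffices to show $\limsup_n\|y+Tx_n\|\geq(\|y\|^p+L^p)^{1/p}$. If $L=0$ this is immediate since $Tx_n\to0$. Otherwise I pass to a subsequence with $\|x_{n_k}\|\to L$ and, by a standard gliding-hump (Bessaga--Pe\l czy\'nski) argument, extract a disjointly supported block sequence $(u_k)$ in $\ell_p$ with $\|x_{n_k}-u_k\|\to0$; then $\|u_k\|\to L$, and disjointness of supports gives $\|\sum_k a_k u_k\|=(\sum_k|a_k|^p\|u_k\|^p)^{1/p}$. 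I then form the operator $V\colon\ell_p\to\ell_p$, $Ve_k=u_k$, and apply the hypothesis to $TV\in\mathcal{L}(\ell_p,Y)$ and $y$, obtaining $\limsup_k\|y+Tu_k\|\geq(\|y\|^p+m(TV)^p)^{1/p}$. Since $\|x_{n_k}-u_k\|\to0$, the left-hand side equals $\limsup_k\|y+Tx_{n_k}\|\leq\limsup_n\|y+Tx_n\|$; and discarding finitely many terms so that $\|u_k\|\geq L-\varepsilon$ gives $m(V)\geq L-\varepsilon$, whence $m(TV)\geq m(T)\,m(V)\geq L-\varepsilon$. Letting $\varepsilon\to0$ closes the argument.

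The main obstacle is the backward direction, precisely the reduction from an arbitrary weakly null sequence to the canonical basis: the hypothesis only controls the sequence $(Te_n)$, so the general sequence must be transported onto a block basis by gliding hump and reinterpreted as $TV$ for the coordinate embedding $V$, after which one must verify that the lower bound $m(TV)\geq L$ indeed survives. This uses two structural features of $\ell_p$, namely that a disjointly supported block basis is isometrically $\ell_p$-like up to the individual norms $\|u_k\|$, and the submultiplicativity $m(TV)\geq m(T)\,m(V)$ together with $m(T)\geq1$. The forward direction, by contrast, is routine once the correct test vectors and the scaling $S=T/m(T)$ are chosen.
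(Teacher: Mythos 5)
Your proof is correct and takes essentially the same route as the paper's: the forward direction tests property $(m)$ against the canonical basis after rescaling by $m(T)$, and the backward direction uses the Bessaga--Pe\l czy\'nski principle to transport a general weakly null sequence onto the canonical basis via an embedding operator (your $V$, the paper's $(1+\varepsilon)$-almost isometric $\Phi$) and then applies the hypothesis to the composition together with $m(TV)\geq m(T)\,m(V)$. The differences are cosmetic --- you argue directly with exactly disjointly supported blocks where the paper argues by contradiction with an almost isometry, and you explicitly settle the $m(T)=0$ case by weak lower semicontinuity, a point the paper passes over with ``we may assume $m(T)\neq 0$''.
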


\begin{proof}
The `only if' part is straightforward.
Indeed, if the pair $(\ell_p,Y)$ satisfies property $(m)$, then for given $y\in Y$ and $T\in\mathcal{L}(\ell_p,Y)$, we may assume $m(T)\neq0$.
Then we have
$$m(T)\cdot\limsup_n\left\|\frac{y}{m(T)}+\frac{T}{m(T)}e_n\right\|\geq m(T)\cdot\limsup_n\left\|\frac{\|y\|}{m(T)}e_1+e_n\right\|=\left(\|y\|^p+m(T)^p\right)^{\frac{1}{p}}.$$

For the converse, assume the pair $(\ell_p,Y)$ does not have property $(m)$.
Then there exist $x\in \ell_p$, $y\in Y$ with $\|x\|\leq\|y\|$, an operator $T\in\mathcal{L}(\ell_p,Y)$ with $m(T)\geq1$ and a weakly null sequence $(x_n)_n$ in $\ell_p$ such that
$$\limsup_n \|x+x_n\| > \limsup_n \|y+Tx_n\|.$$
Passing to a subsequence if necessary, we can assume that the limit $\lim_n \|x_n\|$ exists.
Moreover, this limit cannot be zero, hence, after normalization, we may assume that $\lim_n\|x_n\|=1$.

For any $\varepsilon>0$, there exists a subsequence $(x_{n_i})_i$ of $(x_n)_n$ and a $(1+\varepsilon)$-almost isometric operator $\Phi:\ell_p\to\overline{\text{span}}\{x_{n_i}\}$ such that $\Phi(e_i)=x_{n_i}$. Then we have
\begin{align*}
\limsup_n \|y+Tx_n\| &\geq \limsup_i \|y+Tx_{n_i}\|\\
&= \limsup_i \|y+T\Phi e_i\|\\
&\geq \left( \|y\|^p + m(T\Phi)^p \right)^{\frac{1}{p}} \geq \left( \|y\|^p + (1+\varepsilon)^{-p} \right)^{\frac{1}{p}}.
\end{align*}
Taking $\varepsilon\to0$, we obtain 
$$\limsup_n \|y+Tx_n\|\geq \left( \|y\|^p + 1\right)^{\frac{1}{p}}.$$
However, by assumption, we have
$$\left( \|x\|^p + 1\right)^{\frac{1}{p}} = \limsup_n \|x+x_n\| > \limsup_n \|y+Tx_n\| \geq \left( \|y\|^p + 1\right)^{\frac{1}{p}},$$
which leads to a contradiction.
\end{proof}

\begin{example}\label{NewEx1}
For $2\leq p<\infty$, the pair $(\ell_p,L_p[0,1])$ has property $(m)$ and therefore has the WmP.
\end{example}

\begin{proof}
If $p=2$, then both the domain and the range are Hilbert spaces, and hence the WmP follows from \cite{Cha}. On the other hand, note that for every Hilbert space $H$, it holds that $\mathcal{K}(H)$ is an M-ideal in $\mathcal{L}(H)$ \cite{AE}. In particular, $H$ has property $(M)$ \cite[VI. Theorem 4.17]{HWW}. Consequently by Proposition \ref{BasicFacts}, the pair $(\ell_2,L_2[0,1])$ has property $(m)$.

Suppose $p>2$. 
Let $T:\ell_p\to L_p[0,1]$ be a given operator, and let $y\in L_p[0,1]$. 
Since the sequence $(Te_n)_n$ tends to zero in measure (see, for instance, \cite[Proposition 6.5]{KW}), we have
$$\limsup_n \|y+Te_n\| = \left( \|y\|^p + \limsup_n \|Te_n\|^p \right)^{\frac{1}{p}} \geq \left( \|y\|^p + m(T)^p \right)^{\frac{1}{p}}.$$
Hence, the pair $(\ell_p,L_p[0,1])$ satisfies the WmP by Theorem \ref{Main}, Proposition \ref{BasicFacts}, and Lemma \ref{lpminM}.
\end{proof}

Next, we explore the relationships between properties $(m)$ and $(o)$ and the moduli of asymptotic uniform convexity and smoothness.

The \textbf{modulus of asymptotic uniform convexity} $\bar{\delta}_X:[0,\infty)\to[0,\infty)$ of a Banach space $X$ is the function defined by
$$\bar{\delta}_X(t) = \inf_{x\in S_X} \bar{\delta}_X(x,t),$$
where
$$\bar{\delta}_X(x,t) = \sup_{\dim(X/Y)<\infty} \inf_{\substack{y\in Y\\ \|y\|\geq t}} \|x+y\| - 1.$$

Similarly, the \textbf{modulus of asymptotic uniform smoothness} $\bar{\rho}_X:[0,\infty)\to[0,\infty)$ of a Banach space $X$ is given by
$$\bar{\rho}_X(t) = \sup_{x\in S_X} \bar{\rho}_X(x,t),$$
where
$$\bar{\rho}_X(x,t) = \inf_{\dim(X/Y)<\infty} \sup_{\substack{y\in Y\\ \|y\|\leq t}} \|x+y\| - 1.$$

We recall the following inequalities (see, for example, \cite[Remark 1]{HK}), which directly reveals the connection between properties $(m)$ and $(o)$. 
For every $x \in X$ and every weakly null sequence $(x_n)_n$ in $X$, we have
\begin{align*}
    \|x\|+\|x\|\bar{\delta}_X\left(\frac{\liminf_{n}\|x_n\|}{\|x\|}\right) &\leq \liminf_{n} \|x + x_n\|\\
    &\leq \limsup_n\|x+x_n\| \leq \|x\| + \|x\|\, \bar{\rho}_X\left( \frac{\limsup_{n} \|x_n\|}{\|x\|} \right).
\end{align*}

\begin{theorem}\label{GPanalyze}
For Banach spaces $X$ and $Y$, if $\bar{\rho}_X \leq \bar{\delta}_Y$, then the pair $(X,Y)$ has property $(m)$.
\end{theorem}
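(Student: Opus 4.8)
The plan is to feed the two displayed modulus estimates into the definition of property $(m)$ and reduce everything to a scalar monotonicity statement for $\bar{\delta}_Y$. Fix $T\in\mathcal{L}(X,Y)$ with $m(T)\ge 1$, vectors $x\in X$, $y\in Y$ with $\|x\|\le\|y\|$, and a weakly null sequence $(x_n)_n$ in $X$. As $T$ is weak-to-weak continuous, $(Tx_n)_n$ is weakly null in $Y$, and $m(T)\ge1$ forces $\|Tx_n\|\ge\|x_n\|$ for all $n$. I would first pass to a subsequence along which $\|x+x_n\|$ tends to $\limsup_n\|x+x_n\|$ and then refine it so that $\|x_n\|\to a$ and $\|Tx_n\|\to b$; note $b\ge a$. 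The degenerate cases are disposed of directly: if $\dim X<\infty$ then weakly null sequences are norm null and the conclusion is immediate, while if $x=0$ it follows from the bound $\bar{\delta}_Y(t)\ge t-1$ proved below (which gives $\liminf_n\|y+Tx_n\|\ge b\ge a$). So assume $\dim X=\infty$, $x\ne0$, and hence $y\ne0$.

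Applying the asymptotic smoothness estimate in $X$, then the hypothesis $\bar{\rho}_X\le\bar{\delta}_Y$, and finally the asymptotic convexity estimate in $Y$ to $(Tx_n)_n$, I obtain
\begin{align*}
\limsup_n\|x+x_n\|
&\le \|x\|\Big(1+\bar{\rho}_X\big(\tfrac{a}{\|x\|}\big)\Big)
\le \|x\|\Big(1+\bar{\delta}_Y\big(\tfrac{a}{\|x\|}\big)\Big)\\
&\le \|y\|\Big(1+\bar{\delta}_Y\big(\tfrac{b}{\|y\|}\big)\Big)
\le \liminf_n\|y+Tx_n\|
\le \limsup_n\|y+Tx_n\|.
\end{align*}
The first, second, and fourth inequalities are immediate. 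The third inequality is the crux: it splits, using that $\bar{\delta}_Y$ is nondecreasing, into the passage from $a$ to $b$ (trivial) and the genuinely nontrivial claim that, for each fixed $c\ge0$, the function $\phi(s):=s\big(1+\bar{\delta}_Y(c/s)\big)$ is nondecreasing in $s>0$.

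To establish this monotonicity I would record two properties of $F:=1+\bar{\delta}_Y$. First, $F$ is $1$-Lipschitz: scaling a near-minimizer in the definition of $\bar{\delta}_Y$ up to the sphere of radius $t+h$ shows $\inf_{z\in Z,\,\|z\|\ge t+h}\|w+z\|\le\inf_{z\in Z,\,\|z\|\ge t}\|w+z\|+h$ for every $w\in S_Y$ and every finite-codimensional $Z$, hence $\bar{\delta}_Y(t+h)\le\bar{\delta}_Y(t)+h$. Second, $F(t)\ge t$, i.e. $\bar{\delta}_Y(t)\ge t-1$; this is where the hypothesis enters, via $\bar{\delta}_Y\ge\bar{\rho}_X$ together with the universal bound $\bar{\rho}_X(t)\ge t-1$ valid for every infinite-dimensional $X$. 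For the latter, fix $w\in S_X$ and a finite-codimensional (hence infinite-dimensional) subspace $Z$, choose $v\in Z$ with $\|v\|=t$, and let $\psi\in S_{X^*}$ norm $v$; evaluating $\psi$ on $w+v$ and on $v-w$ gives $\|w+v\|\ge t+\psi(w)$ and $\|w-v\|\ge t-\psi(w)$, so $\max\{\|w+v\|,\|w-v\|\}\ge t$, whence $\sup_{v'\in Z,\,\|v'\|\le t}\|w+v'\|\ge t$ and $\bar{\rho}_X(w,t)\ge t-1$. Granting both properties, for $0<t_1<t_2$ the Lipschitz bound gives $F(t_2)\le F(t_1)+(t_2-t_1)$, and since $F(t_1)\ge t_1$ a direct rearrangement yields $F(t_2)/t_2\le F(t_1)/t_1$; thus $t\mapsto F(t)/t$ is nonincreasing, which (via $t=c/s$) is exactly the monotonicity of $\phi$.

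The main obstacle is precisely the second property $\bar{\delta}_Y(t)\ge t-1$. It is equivalent to $Y$ having the non-strict Opial property---every weakly null $(u_n)_n$ with $\liminf_n\|u_n\|\ge t$ satisfies $\liminf_n\|w+u_n\|\ge t$---which fails for general range spaces, so it cannot be assumed and must be extracted from the hypothesis. The mechanism is the asymmetry between the two moduli: the innermost supremum defining $\bar{\rho}_X$ allows the sign choice that makes one of $\|w\pm v\|$ large and forces $\bar{\rho}_X(t)\ge t-1$ unconditionally, whereas the innermost infimum defining $\bar{\delta}_Y$ offers no such freedom; only through $\bar{\rho}_X\le\bar{\delta}_Y$ does $Y$ inherit the asymptotic convexity that powers the monotonicity lemma. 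Once this is in hand, the remaining steps are routine bookkeeping with $\limsup$, $\liminf$, and the stated estimates.
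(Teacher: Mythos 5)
Your proof is correct, and its skeleton is the same as the paper's: pass to subsequences, apply the asymptotic smoothness estimate in $X$, the hypothesis $\bar{\rho}_X\le\bar{\delta}_Y$, the monotonicity of $\bar{\delta}_Y$ together with $\|Tx_n\|\ge m(T)\|x_n\|\ge\|x_n\|$, and finally the asymptotic convexity estimate in $Y$ applied to the weakly null sequence $(Tx_n)_n$. Where you genuinely diverge is in the case $\|x\|<\|y\|$, which is the real crux. The paper disposes of it with a purely formal convexity trick: write $x=\lambda v+(1-\lambda)(-v)$ with $\|v\|=\|y\|$, use convexity of $z\mapsto\limsup_n\|z+x_n\|$ to get $\limsup_n\|x+x_n\|\le\max\{\limsup_n\|v+x_n\|,\limsup_n\|-v+x_n\|\}$, and apply the equal-norm case to $\pm v$; this needs nothing beyond the two displayed modulus estimates and monotonicity of $\bar{\delta}_Y$, and in particular no case distinction on $\dim X$. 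You instead prove that $s\mapsto s\bigl(1+\bar{\delta}_Y(c/s)\bigr)$ is nondecreasing, which forces you to establish two auxiliary facts: that $\bar{\delta}_Y$ is $1$-Lipschitz (your scaling argument is fine, but note it tacitly requires $Y$ infinite-dimensional so that the relevant infima are finite; this is harmless, since whenever you invoke the lemma with $a>0$ you have $b\ge a>0$, and a finite-dimensional $Y$ would force the weakly null sequence $(Tx_n)_n$ to be norm null, i.e.\ $b=0$), and that $\bar{\delta}_Y(t)\ge t-1$, which you correctly extract from the hypothesis via the universal bound $\bar{\rho}_X(t)\ge t-1$ valid for infinite-dimensional $X$ --- hence your separate, and for this route necessary, disposal of finite-dimensional $X$. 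Both arguments are sound; the paper's is shorter and self-contained, while yours produces reusable by-products: the nonincreasingness of $t\mapsto(1+\bar{\delta}_Y(t))/t$, and the observation that $\bar{\rho}_X\le\bar{\delta}_Y$ with $\dim X=\infty$ already forces $\bar{\delta}_Y(t)\ge t-1$, which is the non-strict version of condition (ii) in Corollary \ref{WmPmoduli} and thus ties your argument directly to property $(o)$.
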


\begin{proof}
Let $T\in \mathcal{L}(X,Y)$ satisfy $m(T)\geq1$, $x\in X$ and $y\in Y$ satisfy $\|x\|\leq\|y\|$, and let $(x_n)_n$ be a weakly null sequence in $X$.

If $\|x\|=\|y\|=0$, then
$$\limsup_n \|x+x_n\| = \limsup_n \|x_n\| \leq \limsup_n \|Tx_n\| = \limsup_n \|y+Tx_n\|.$$
Suppose $\|x\|=\|y\|\neq0$. There is a subsequence $(x_{n_i})_i$ of $(x_n)_n$ satisfying
 $$\limsup_n \|x+x_n\| = \lim_i \|x+x_{n_i}\|,\quad \text{and}\quad \limsup_i\|x_{n_i}\|=\lim_i\|x_{n_i}\|.$$

Then we have
\begin{align*}
\limsup_n \|x+x_n\| &=\lim_i\|x+x_{n_i}\|\\
&\leq \|x\| + \|x\| \bar{\rho}_X\left( \frac{\limsup_i \|x_{n_i}\|}{\|x\|} \right)\\
&\leq \|y\| + \|y\| \bar{\rho}_X\left( \frac{\lim_i \|x_{n_i}\|}{\|y\|} \right)\\
&\leq \|y\| + \|y\| \bar{\delta}_Y\left( \frac{\lim_i \|x_{n_i}\|}{\|y\|} \right)\\
&\leq \|y\| + \|y\| \bar{\delta}_Y\left( \frac{\liminf_i \|Tx_{n_i}\|}{\|y\|} \right)\\
&\leq \liminf_i \|y+Tx_{n_i}\| \leq \limsup_n \|y+Tx_n\|.
\end{align*}

If $\|x\|<\|y\|$, there exists $v\in X$ with $\|v\|=\|y\|$ and $0\leq\lambda\leq1$ such that $x=\lambda v+(1-\lambda)(-v)$. Then we have
$$\limsup_n \|x+x_n\| \leq \max\{ \limsup_n \|v+x_n\|, \limsup_n \|-v+x_n\| \} \leq \limsup_n \|y+Tx_n\|.$$
Hence the desired inequality holds, which completes the proof.
\end{proof}

From Theorem \ref{Main}, Proposition \ref{BasicFacts}, Theorem \ref{GPanalyze}, and since \cite[Theorem 17]{HK} shows that if a Banach space $X$ satisfies $\bar{\delta}_X(x,t)\geq t-1$ for every $t>1$ and $x\in S_X$, then it has property $(O)$. Therefore we immediately obtain the following.

\begin{cor}\label{WmPmoduli}
If a pair $(X,Y)$ of Banach spaces satisfies the following conditions
\begin{itemize}
    \item[(i)] $\bar{\rho}_X\leq \bar{\delta}_Y$,
    \item[(ii)] either for every $t\geq 1$,
    $$\bar{\delta}_X(x,t)>t-1,\quad \text{for all}~ x\in S_X$$
    or for every $t\geq 1$,
    $$\bar{\delta}_Y(y,t)>t-1\quad \text{for all}~ y\in S_Y$$
    \item[(iii)] $X$ is reflexive,
\end{itemize}
then $(X,Y)$ has the WmP.
\end{cor}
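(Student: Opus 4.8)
The plan is to assemble this corollary directly from the machinery already developed, namely Theorems \ref{Main} and \ref{GPanalyze}, Proposition \ref{BasicFacts}, and the Opial criterion recorded from \cite[Theorem 17]{HK}. I would begin with condition (i): since $\bar{\rho}_X \leq \bar{\delta}_Y$, Theorem \ref{GPanalyze} immediately yields that the pair $(X,Y)$ has property $(m)$. This secures one of the two hypotheses demanded by Theorem \ref{Main}.

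The substance of the argument lies in extracting property $(o)$ from condition (ii), which I would handle by splitting into its two alternatives. The key observation is that a strict bound $\bar{\delta}_Z(z,t) > t-1$ valid for all $z \in S_Z$ and all $t \geq 1$ in particular forces $\bar{\delta}_Z(z,t) \geq t-1$ for every $t > 1$, so by \cite[Theorem 17]{HK} the space $Z$ possesses the Opial property. Consequently, if the first alternative holds then $X$ has the Opial property, and combining this with the property $(m)$ already obtained, Proposition \ref{BasicFacts}(4) produces property $(o)$ for $(X,Y)$. If instead the second alternative holds then $Y$ has the Opial property, whence Proposition \ref{BasicFacts}(2) delivers property $(o)$ for $(X,Y)$ outright. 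In either case the pair $(X,Y)$ enjoys property $(o)$.

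Finally I would invoke condition (iii): with $X$ reflexive and both properties $(m)$ and $(o)$ in hand, Theorem \ref{Main} furnishes the WmP for $(X,Y)$, completing the chain.

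Because every link in this deduction is already proved, I do not expect a genuine obstacle here; the statement is essentially a repackaging. The only point requiring care is the bookkeeping inside condition (ii): one must verify that the strict modulus inequality translates correctly into the Opial property via \cite[Theorem 17]{HK}, and—more importantly—that the two alternatives are routed through the correct parts of Proposition \ref{BasicFacts}, namely part (4) when the hypothesis is placed on $X$ (where the previously established property $(m)$ is genuinely needed), and part (2) when it is placed on $Y$ (where no appeal to property $(m)$ is required).
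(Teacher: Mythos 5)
Your proposal is correct and follows exactly the route the paper takes: condition (i) with Theorem \ref{GPanalyze} yields property $(m)$, condition (ii) via \cite[Theorem 17]{HK} gives the Opial property for $X$ or for $Y$, which through Proposition \ref{BasicFacts} (part (4) in the first alternative, part (2) in the second) produces property $(o)$, and Theorem \ref{Main} together with the reflexivity in (iii) concludes. The paper presents this corollary as an immediate consequence of precisely these ingredients, so your expanded bookkeeping matches its intended proof.
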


We now present specific examples based on this result.
According to \cite[Theorem 2.12]{DJM}, for $1<p\leq s\leq q<\infty$, we have
\begin{equation}\label{lslplqeqn}
\bar{\rho}_{\ell_s\oplus_q\ell_q} = \bar{\delta}_{\ell_s\oplus_p\ell_p} = (1+t^s)^{\frac{1}{s}}-1.
\end{equation}
This leads to the following conclusion.

\begin{example}\label{NewEx2}
For $1<p\leq r\leq s\leq q<\infty$, the pair $(\ell_s\oplus_q\ell_q,\ell_r\oplus_p\ell_p)$ has property $(m)$. In particular, it has the WmP.
\end{example}
\begin{proof}
    By equation (\ref{lslplqeqn}) and the monotonicity of $\ell_p$-norm, we observe that
    $$\bar{\rho}_{\ell_s\oplus_q\ell_q}=(1+t^s)^{\frac{1}{s}}-1\leq(1+t^r)^{\frac{1}{r}}-1=\bar{\delta}_{\ell_r\oplus_p\ell_p},$$
    and Theorem \ref{GPanalyze} shows that the pair $(\ell_s\oplus_q\ell_q,\ell_r\oplus_p\ell_p)$ has property $(m)$. On the other hand,
    $$\bar{\delta}_{\ell_r\oplus_p\ell_p}=(1+t^r)^{\frac{1}{r}}-1>t-1.$$
    Consequently, by Corollary \ref{WmPmoduli}, $(\ell_s\oplus_q\ell_q,\ell_r\oplus_p\ell_p)$ has the WmP.
\end{proof}

It is natural to ask whether the WmP and the WMP are in fact equivalent. However, Example \ref{NewEx2} provides a negative answer to this question. Indeed, according to \cite[Theorem 2.11]{DJM}, for $1<p\leq s\leq q<\infty$, the pair $(\ell_s\oplus_q\ell_q,\ell_s\oplus_p\ell_p)$ does not satisfy the WMP, while it satisfies the WmP.

\begin{prop}
    For a Banach space $X$,
    \begin{enumerate}
        \item[(1)] The pair $(X,X)$ has property $(m)$ if and only if $X$ has property $(M)$.
        \item[(2)] The pair $(X,X)$ has property $(o)$ if and only if $X$ has the Opial property.
    \end{enumerate}
\end{prop}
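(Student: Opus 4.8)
The plan is to reduce everything to the identity operator. The key elementary observation is that for $T=\mathrm{id}_X$ the operator-level conditions defining the properties coincide in pairs: property $(m)$ for $\mathrm{id}_X$ asserts that $\|x\|\le\|y\|$ implies $\limsup_n\|x+x_n\|\le\limsup_n\|y+x_n\|$, which is precisely property $(M)$ for $\mathrm{id}_X$ after interchanging the names of the two vectors; likewise property $(o)$ for $\mathrm{id}_X$ asserts that $\limsup_n\|x_n\|<\limsup_n\|y+x_n\|$ for every nonzero $y$, which is verbatim property $(O)$ for $\mathrm{id}_X$. Moreover $m(\mathrm{id}_X)=\|\mathrm{id}_X\|=1$, so the identity lies in the class of operators (those with $m(T)\ge 1$) tested in the definitions of the pair properties $(m)$ and $(o)$.

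With this in hand, the forward implications are immediate specializations. If $(X,X)$ has property $(m)$, then in particular $\mathrm{id}_X$, having $m(\mathrm{id}_X)=1$, satisfies property $(m)$; by the observation above this is exactly property $(M)$ for $\mathrm{id}_X$, so $X$ has property $(M)$. The same specialization applied to property $(o)$ shows that $\mathrm{id}_X$ satisfies property $(O)$, i.e. $X$ has the Opial property.

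For the reverse implications one cannot argue from the identity alone, since the pair properties require the condition for \emph{every} operator with $m(T)\ge 1$, not merely $\mathrm{id}_X$; here I would invoke the results already established in the excerpt. If $X$ has property $(M)$, then Proposition \ref{BasicFacts}(1) applied with $Y=X$ yields that $(X,X)$ has property $(m)$, and if $X$ has the Opial property, then Proposition \ref{BasicFacts}(2) applied with $Y=X$ yields that $(X,X)$ has property $(o)$. No genuine obstacle arises: the statement is short bookkeeping, and the only point deserving a careful read is the verbatim coincidence of the operator-level conditions for $T=\mathrm{id}_X$ (after relabeling $x\leftrightarrow y$) together with the fact that $m(\mathrm{id}_X)=1$ places the identity within the tested class.
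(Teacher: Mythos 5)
Your proposal is correct and follows the same route as the paper: the ``only if'' directions are obtained by specializing to $T=\mathrm{id}_X$ (noting $m(\mathrm{id}_X)=1$, so the identity lies in the tested class, and that the operator-level conditions coincide after relabeling), and the ``if'' directions follow from items (1) and (2) of Proposition \ref{BasicFacts} with $Y=X$. No gaps; your explicit check of the verbatim coincidence of the conditions for the identity is the only detail the paper leaves implicit.
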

\begin{proof}~
    For both assertions, the only if part follows by taking $T=\mathrm{id}_X$ in the corresponding definition of the pairwise property.
    
    Conversely, if $X$ has property $(M)$ (the Opial property, respectively), by the item (1) ((2), respectively) of Proposition \ref{BasicFacts}, the pair $(X,X)$ has property $(m)$ ((o), respectively).
\end{proof}

This observation highlights the relation between property $(M)$ and property $(m)$.
One may naturally ask whether these two properties are equivalent.
However, the pair $(\ell_s\oplus_q\ell_q,\ell_s\oplus_p\ell_p)$ considered above has property $(m)$ but does not have property $(M)$.
Indeed, the domain $\ell_s\oplus_q\ell_q$ is reflexive, and both $\ell_s\oplus_q\ell_q$ and $\ell_s\oplus_p\ell_p$ have the Opial property.
Since the pair $(\ell_s\oplus_q\ell_q,\ell_s\oplus_p\ell_p)$ does not satisfy the WMP, by the contrapositive of \cite[Theorem 12]{HK}, it follows that $(\ell_s\oplus_q\ell_q,\ell_s\oplus_p\ell_p)$ does not have property $(M)$. 

Finally, we conclude this section with a few immediate examples.

\begin{remark}\label{NewEx3}(\cite{Mil}, \cite[Example 2.2]{GP})
\begin{itemize}
    \item[(1)] For the James space $J$, $\bar{\delta}_J(t) = \bar{\rho}_J(t) = (1+t^2)^{\frac{1}{2}} - 1$ for all $t>0$.
    \item[(2)] If a Banach space $X$ admits a Schauder basis with a upper (lower, respectively) $\ell_p$-estimate of constant $1$, then $\bar{\rho}_X(t) \leq (1+t^p)^{\frac{1}{p}}-1$ ($\bar{\delta}_X(t) \geq (1+t^p)^{\frac{1}{p}}-1$, respectively). In particular, for $1<p<\infty$, if $Z=T^p$ or $Z=d(w,p)$, then $\bar{\rho}_Z(t) \leq (1+t^p)^{\frac{1}{p}}-1$, where $T^p$ is the $p$-convexification of the Tsirelson space \cite{Bra}, and $d(w,p)$ is the Lorentz sequence space \cite[p. 177]{LT}.
\end{itemize}
\end{remark}

Hence, we obtain the following examples.

\begin{example}
Let $Z$ be the $p$-convexification of the Tsirelson space $T^p$ or a Lorentz sequence space $d(w,p)$.
Then we have
\begin{itemize}
    \item[(a)] If $1< q\leq p<\infty$, then $(Z,\ell_q)$ has the WmP.
    \item[(b)] If $2\leq p<\infty$, then $(Z,J)$ has the WmP.
    \item[(c)] If $2\leq q<\infty$, then $(\ell_q,J)$ has the WmP.
\end{itemize}
\end{example}
\begin{proof}
    By Remark \ref{NewEx3} and Corollary \ref{WmPmoduli}, the above pairs satisfy both properties $(m)$ and $(o)$. 
    Moreover, since the Tsirelson space (see \cite{FJ}, for example) and the Lorentz sequence spaces (see \cite{LT}, for example) are reflexive, the conclusion follows from Theorem \ref{Main}.
\end{proof}

\section{Weak minimizing property in classical Banach spaces}\label{section3}

In this section, we provide several examples of pairs that either satisfy or fail to satisfy the WmP and the CPPm.
We particularly focus on pairs consisting of the classical sequence spaces $c_0$ and $\ell_p$ for $1 \leq p < \infty$.

The following example shows that none of these pairs satisfy the WmP.

\begin{example}\label{l1lp}
Let $1 \leq p < \infty$. Then
\begin{enumerate}
    \item The pairs $(\ell_1, \ell_p)$ and $(\ell_1, c_0)$ do not satisfy the WmP.
    \item The pair $(c_0, \ell_p)$ does not satisfy the WmP.
    \item The pair $(c_0, c_0)$ does not satisfy the WmP.
\end{enumerate}
\end{example}

\begin{proof}
Let $(e_i)_i$, $(f_i)_i$ and $(e_i^*)_i$ be the canonical bases of $c_0$, $\ell_p$ and $\ell_1$, respectively.
\begin{enumerate}
    \item Define an operator $T$ by $Te^*_n = 2^{-n}g_n$ for each $n\in\mathbb{N}$, where $(g_i)_i$ denotes the canonical basis of the range space.
    It is clear that $T$ does not attain its minimum modulus.
    But the sequence $(e_n^*)_n$ is a minimizing sequence for $T$ which does not converge weakly to zero.

    \item Define a sequence $(x_n^*)_n$ in $\ell_1$ by
    $$x_j^* = \frac{1}{2^j}e_j^* + \sum_{i=j+1}^\infty \frac{-1}{2^i}e_i^*
    \quad \text{for each } j \in \mathbb{N}.$$
    Consider the operator
    $$T = \sum_{i=1}^\infty x_i^* \otimes f_i.$$
    Then $T \in \mathcal{L}(c_0, \ell_p)$ and it satisfies
    \begin{align*}
        \left\| T\left(\sum_{k=1}^m e_k\right) \right\| &= \left\| \sum_{i=1}^\infty \sum_{k=1}^m x_i^*(e_k) f_i \right\| \\
        &= \left\| \frac{1}{2^m}f_m+\sum_{i=1}^{m-1} \left( \frac{1}{2^i} + \sum_{k=i+1}^m \frac{-1}{2^k} \right) f_i \right\| \\
        &= \left\|\frac{1}{2^m}f_m+\sum_{i=1}^{m-1}\frac{1}{2^m}f_i\right\| \\
        &= \frac{m^{\frac{1}{p}}}{2^m}.
    \end{align*}
    Therefore $m(T) = 0$, and the sequence $\left( \sum_{i=1}^n e_i \right)_n$ in $S_{c_0}$ is a non-weakly null minimizing sequence for $T$.
    Suppose that $x \in S_{c_0}$ satisfies $T(x) = 0$. 
    Then it is clear that $x^*_k(x)=0$, for every $k\in\mathbb{N}$.
    Writing $x = \sum_{i=1}^\infty x_i e_i$, we obtain
    $$\frac{1}{2^k}x_k-\sum_{i=k+1}^\infty \frac{1}{2^i}x_i=0,$$
    for every $k\in\mathbb{N}$.
    Hence,
    $$x_k=\sum_{i=1}^\infty\frac{1}{2^i}x_{k+i}.$$
    It follows, for each $k\in\mathbb{N}$ there exists $k_0>k$ such that $|x_{k_0}|\geq |x_k|$, which contradicts the fact that $x\in c_0$. 
    Consequently, $T$ does not attain its minimum modulus.

    \item Consider the operator $T \in \mathcal{L}(c_0)$ defined by
    $$T = 2 e_1^* \otimes e_1 + \sum_{i=2}^\infty \left( 1 + \frac{1}{i} \right) e_i^* \otimes e_i.$$
    Note that for any $x \in S_{c_0}$, there exists $n \in \mathbb{N}$ such that $|e_n^*(x)| = 1$. Therefore, $m(T)=1$, and the operator $T$ does not attain its minimum modulus. On the other hand, the sequence $\left( \frac{1}{2}e_1 + e_n \right)_n$ in $S_{c_0}$ is a non-weakly null minimizing sequence for $T$.
\end{enumerate}
\end{proof}

Note that for $1\leq p<\infty$, by Proposition \ref{BasicFacts}, the pairs $(\ell_1,\ell_p)$, $(\ell_1,c_0)$, $(c_0,\ell_1)$ and $(c_0,\ell_p)$ all satisfy both properties $(m)$ and $(o)$. 
However, Example \ref{l1lp} shows that these pairs do not have the WmP.
Hence, the reflexivity assumption in Theorem \ref{Main} cannot be omitted.

In \cite[Theorem 2.7]{DJM}, the authors proved that a pair $(X,Y)$ satisfies the WMP for every reflexive Banach space $X$ if and only if $Y$ has the Schur property. 
Recall that a Banach space is said to have the {Schur property} if every weakly convergent sequence also converges in norm to the same limit.
$\ell_1$ is a typical example of a space with the Schur property \cite{Sch}. 
We now establish one direction of the minimal modulus analogue of this result as follows.

\begin{prop}\label{Schurl1}
If $Y$ has the Schur property, then for any reflexive Banach space $X$, the pair $(X,Y)$ has the WmP. In particular, for any reflexive Banach space $X$, the pair $(X, \ell_1)$ has the WmP.
\end{prop}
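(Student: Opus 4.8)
The plan is to follow the structure of the proof of Theorem \ref{Main}, but to replace the use of properties $(m)$ and $(o)$ by the Schur property of $Y$, which converts weak convergence of the images into norm convergence. Fix $T\in\mathcal{L}(X,Y)$ admitting a non-weakly null minimizing sequence $(x_n)_n\subset S_X$. First I would extract a well-behaved subsequence: since $x_n\not\rightharpoonup 0$, there exist $f\in X^*$ and $\varepsilon>0$ with $|f(x_n)|>\varepsilon$ along some subsequence, and reflexivity of $X$ lets me pass to a further weakly convergent subsequence $x_n\rightharpoonup x$; testing against $f$ then forces $x\neq 0$. This subsequence is still minimizing, so after relabelling I may assume $x_n\rightharpoonup x$ with $x\in B_X\setminus\{0\}$ and $\|Tx_n\|\to m(T)$.

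The key step is to upgrade weak convergence to norm convergence in the range. Since $T$ is weak-to-weak continuous, $Tx_n\rightharpoonup Tx$ in $Y$, and the Schur property of $Y$ then yields $\|Tx_n-Tx\|\to 0$. Consequently $\|Tx\|=\lim_n\|Tx_n\|=m(T)$.

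It remains to exhibit a unit vector at which $m(T)$ is attained, splitting on the value of $m(T)$. If $m(T)=0$, then $Tx=0$, and the unit vector $x/\|x\|$ satisfies $T(x/\|x\|)=0=m(T)$, exactly as in the first case of Theorem \ref{Main}. If $m(T)>0$, then $T$ is bounded below, that is $\|Tw\|\geq m(T)\|w\|$ for every $w\in X$; applying this to $w=x_n-x$ gives $\|x_n-x\|\leq m(T)^{-1}\|Tx_n-Tx\|\to 0$, so $x_n\to x$ in norm. Hence $\|x\|=\lim_n\|x_n\|=1$, and $x\in S_X$ attains the minimum modulus since $\|Tx\|=m(T)$. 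In either case $T$ attains its minimum modulus, so $(X,Y)$ has the WmP. The ``in particular'' assertion then follows because $\ell_1$ has the Schur property.

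I expect the only genuine subtlety to lie in the case $m(T)>0$: the weak convergence $x_n\rightharpoonup x$ with $\|x_n\|=1$ yields merely $\|x\|\leq 1$ by weak lower semicontinuity of the norm, so $x/\|x\|$ need not be a minimizer a priori. The resolution is that a strictly positive minimum modulus makes $T$ bounded below, and this combines with the norm convergence $Tx_n\to Tx$ supplied by the Schur property to force actual norm convergence $x_n\to x$, whence $\|x\|=1$. Every remaining step is routine.
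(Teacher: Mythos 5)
Your proof is correct and follows essentially the same route as the paper's: extract (via non-weak-nullness and reflexivity) a subsequence converging weakly to a nonzero $x\in B_X$, then use the weak-to-weak continuity of $T$ together with the Schur property of $Y$ to obtain $Tx_{n_i}\to Tx$ in norm and hence $\|Tx\|=m(T)$. Your extra case analysis for $m(T)>0$, where boundedness below ($\|Tw\|\geq m(T)\|w\|$) upgrades $Tx_{n_i}\to Tx$ to $x_{n_i}\to x$ in norm and forces $\|x\|=1$, carefully settles the attainment-at-a-unit-vector point that the paper's terse conclusion leaves implicit.
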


\begin{proof}
Consider an operator $T\in\mathcal{L}(X,Y)$ and its non-weakly null minimizing sequence $(x_n)_n$. 
Take a subsequence $(x_{n_i})_i$ of $(x_n)_n$ which converges weakly to $x\in B_X$. 
Since $T$ is weak-weak continuous and $Y$ has the Schur property, it follows that $Tx_{n_i}\to Tx$, and hence $\|Tx\|=m(T)$.
\end{proof}

As mentioned in item (1) of Example \ref{l1lp}, Proposition \ref{Schurl1} cannot be extended to non-reflexive spaces.

A natural question is whether the pairs in Example \ref{l1lp} have the CPPm.
The answer is affirmative.
Indeed, if $m(T)=0$ for all $T\in\mathcal{L}(X,Y)$, then the pair $(X,Y)$ satisfies the CPPm, as follows from the observation below Remark \ref{Cha} in the case where $m(T)=0$.

On the other hand, these pairs also provide a negative answer to another natural question related to reflexivity.
Recall that a Banach space $X$ is reflexive if and only if there exists a Banach space $Y$ such that the pair $(X,Y)$ has the CPP (\cite{AGPT}, for example). 
However, by the previous observation, there exist several pairs $(X,Y)$ of Banach spaces satisfying the 
CPPm for which $X$ is not reflexive.

We proceed to give a few examples of pairs without the CPPm.

\begin{example}\label{l1the CPPm}
The pair $(\ell_1, \ell_1)$ does not have the CPPm.
\end{example}

\begin{proof}
Let us define operators $T \in \mathcal{L}(\ell_1)$ and $K \in \mathcal{K}(\ell_1)$ by
$$T = \sum_{i=2}^\infty 3e_i^* \otimes \left[\left(1-\frac{1}{2^i}\right)e_2 + e_{i+1}\right] - \frac{3}{2}e_1^* \otimes e_2$$
and
$$K = \frac{3}{2}e_1^* \otimes e_2 + 3e_1^* \otimes e_1,$$
where $(e_i)_i \subset \ell_1$ and $(e_i^*)_i \subset \ell_\infty$ are the canonical bases of $\ell_1$ and $\ell_\infty$, respectively.

For any $x = \sum_{i=1}^\infty \alpha_i e_i \in S_{\ell_1}$, we have
\begin{align*}
\|(T+K)x\| &= \left\|\sum_{i=1}^\infty \alpha_i (T+K)e_i\right\| \\
&= \left\| 3\left[\sum_{i=2}^\infty \alpha_i\left(1-\frac{1}{2^i}\right)e_2 + \sum_{i=2}^\infty \alpha_i e_{i+1}\right] + 3\alpha_1 e_1 \right\| \\
&= 3 + 3\left\|\sum_{i=2}^\infty \alpha_i\left(1-\frac{1}{2^i}\right) e_2 \right\| \\
&\geq 3.
\end{align*}
Thus, it follows that $m(T+K) \geq 3$.

Next, we compute $\|Tx\|$. We observe
\begin{align}\label{Txvalue}
\|Tx\| &= \left\|\sum_{i=1}^\infty \alpha_i Te_i\right\| \nonumber \\
&= 3\left\|-\frac{1}{2}\alpha_1 e_2 + \sum_{i=2}^\infty \alpha_i\left(1-\frac{1}{2^i}\right)e_2 + \sum_{i=2}^\infty \alpha_i e_{i+1}\right\| \nonumber \\
&= 3\sum_{i=2}^\infty |\alpha_i| + 3\left|\frac{1}{2}\alpha_1 - \sum_{i=2}^\infty \alpha_i\left(1-\frac{1}{2^i}\right)\right|.
\end{align}
For a sequence $(x_n)_n$ given by $x_n = \frac{2}{3}e_1 + \frac{1}{3}e_n$, we obtain
$$\|Tx_n\| = 1 + \frac{1}{2^n},$$
for each $n\in\mathbb{N}$, which implies $m(T) \leq 1$.
Hence 
$$\sup_{L\in\mathcal{K}(\ell_1)}m(T+L)>m(T).$$

Now we claim that $T$ does not attain its minimum modulus. Suppose, toward a contradiction, that there exists a sequence $(\beta_i)_i$ with $\sum_{i=1}^\infty |\beta_i| = 1 $ such that
$$\left\|T\left(\sum_{i=1}^\infty \beta_i e_i\right)\right\| = m(T).$$
Then by (\ref{Txvalue}), it is clear that $|\beta_1|<1$, and we have
$$\sum_{i=2}^\infty |\beta_i| \leq \frac{1}{3} \quad \text{and therefore,} \quad |\beta_1| \geq \frac{2}{3}.$$
In this case, it follows that
\begin{align*}
\left\|T\left(\sum_{i=1}^\infty \beta_i e_i\right)\right\| 
&= 3\left( \sum_{i=2}^\infty |\beta_i| + \left|\frac{1}{2}\beta_1 - \sum_{i=2}^\infty \beta_i\left(1-\frac{1}{2^i}\right)\right| \right) \\
&\geq 3\left( \sum_{i=2}^\infty |\beta_i| + \frac{1}{2}|\beta_1| - \sum_{i=2}^\infty |\beta_i|\left(1-\frac{1}{2^i}\right) \right) \\
&> \frac{3}{2}|\beta_1| \\
&\geq 1.
\end{align*}
This contradiction shows that $T$ does not attain its minimum modulus.
\end{proof}

In Section \ref{section2}, we presented examples of pairs of Banach spaces that satisfy the WmP but fail the WMP, and that have property $(m)$ but not property $(M)$.
We also get a result for $p$-sums in order to show both 
$$\text{WMP}\nRightarrow\text{WmP}\quad\text{and}\quad\text{property}~(M)\nRightarrow\text{property}~(m),$$
in general.

\begin{theorem}
Let $X$ and $Y$ be Banach spaces.
If there exists an operator $T\in\mathcal{L}(X,Y)$ with $m(T)>0$ that does not attain its minimum modulus, then for any $1\le p<q<\infty$ and any nontrivial finite-dimensional Banach space $Z$, the pair $(X\oplus_p Z,\,Y\oplus_q Z)$ fails to have the CPPm.
\end{theorem}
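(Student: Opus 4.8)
The plan is to exhibit a single operator $S$ from $X\oplus_p Z$ into $Y\oplus_q Z$ that fails to attain its minimum modulus, yet whose minimum modulus can be made strictly larger by a compact perturbation; this directly contradicts the CPPm. After rescaling we may assume $m(T)=1$, so that $\|Tx\|\ge\|x\|$ for all $x$ and, for each $a\ge 0$, $\inf_{\|x\|=a}\|Tx\|=a$ with the infimum \emph{not} attained when $a>0$ (this is where the non-attainment of $T$ enters). I would take the block-diagonal operator $S(x,z)=(Tx,z)$.

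First I would compute $m(S)$. Since the range carries the $\ell_q$-sum and the domain constraint is the $\ell_p$-sum, writing $a=\|x\|$, $b=\|z\|$ and minimizing the $X$-direction first gives
\[ m(S)^q=\inf_{a^p+b^p=1}\big(a^q+b^q\big)=2^{1-q/p}, \]
the scalar infimum being attained at $a=b=2^{-1/p}$. Because $q>p$ this yields $m(S)=2^{1/q-1/p}<1$. I would then check that $m(S)$ is \emph{not} attained: a minimizer would force simultaneously $a=b=2^{-1/p}$ (from the scalar optimization) and $\|Tx\|=\|x\|$ with $\|x\|=2^{-1/p}>0$ (equality in $\|Tx\|\ge\|x\|$), which is impossible since $T$ does not attain its minimum modulus; the boundary case $a=0$ gives value $1>m(S)$ and so is also excluded.

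Next I would produce the perturbation. For $\lambda>1$ set $K_\lambda(x,z)=(0,(\lambda-1)z)$; since $Z$ is finite-dimensional, $K_\lambda$ has finite rank, hence is compact, and $(S+K_\lambda)(x,z)=(Tx,\lambda z)$. The same reduction gives
\[ m(S+K_\lambda)^q=\inf_{a^p+b^p=1}\big(a^q+\lambda^q b^q\big)=:V_\lambda, \]
which is nondecreasing in $\lambda$. Granting $V_\lambda\to 1$ as $\lambda\to\infty$, we conclude $\sup_{K}m(S+K)\ge\lim_\lambda m(S+K_\lambda)=1>2^{1/q-1/p}=m(S)$, so $S$ witnesses the failure of the CPPm for the pair $(X\oplus_p Z,\,Y\oplus_q Z)$.

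The main obstacle is the asymptotic analysis of $V_\lambda$, and it is precisely here that the hypothesis $p<q$ is used. Substituting $a=(1-b^p)^{1/p}$ and expanding for small $b$ gives $a^q+\lambda^q b^q=1-(q/p)b^p+\lambda^q b^q+o(b^p)$; since $q>p$ the term $\lambda^q b^q$ is negligible against $(q/p)b^p$ for small $b$, so $V_\lambda<1$ for \emph{every} finite $\lambda$ (there is always a ``thin'' direction with small $b$ dragging the value below $1$). The delicate point is thus not a crude estimate but the fact that, although $V_\lambda<1$ always, the minimizer is forced toward $0$ and $V_\lambda\uparrow 1$. I would extract this from the critical-point equation $(1-b^p)^{q/p-1}=\lambda^q b^{q-p}$: as $\lambda\to\infty$ its solution $b^*_\lambda\to 0$, the first summand of $V_\lambda$ tends to $1$, while the second equals $(1-(b^*_\lambda)^p)^{q/p-1}(b^*_\lambda)^p\to 0$, giving $V_\lambda\to 1$ as required.
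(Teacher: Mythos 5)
Your proof is correct, and its core coincides with the paper's: the same block operator $S(x,z)=(Tx,z)$ after normalizing $m(T)=1$, the same scalar reduction giving $m(S)=2^{\frac{1}{q}-\frac{1}{p}}$ (the paper writes $2^{\frac{p-q}{pq}}$), and the same non-attainment argument (a minimizer would force the scalar optimum $a=b=2^{-1/p}$ together with $\|Tx\|=\|x\|$, contradicting that $T$ does not attain $m(T)$). The genuine difference is the final step. The paper writes $S=(T,0)+(0,\mathrm{id}_Z)$ and concludes at once that the CPPm would force $S$ to attain: it invokes the CPPm in the form ``a compact perturbation that strictly increases the minimum modulus (here from $m((T,0))=0$ to $m(S)>0$) must attain it''. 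You instead perturb $S$ itself by $K_\lambda=(0,(\lambda-1)\mathrm{id}_Z)$ and show $\sup_{K}m(S+K)\geq\sup_\lambda V_\lambda^{1/q}=1>m(S)$, which violates the formulation of the CPPm recorded in Remark~\ref{Cha} (every non-attaining operator $R$ satisfies $\sup_K m(R+K)=m(R)$) applied to the non-attaining operator $S$. This difference is worth noting: for the minimum modulus, unlike for the norm, the two readings (``the perturbed operator attains'' versus ``the unperturbed one does'') are not interchangeable by renaming $T$ and $T+K$, and it is your version that matches the paper's stated definition verbatim, so your argument is arguably the more self-contained of the two. The price you pay is the asymptotic analysis of $V_\lambda$; it is correct (the boundary values $1$ and $\lambda^q$ both exceed $V_\lambda$, so the infimum is interior, and any critical point satisfies $b^*_\lambda\leq\lambda^{-q/(q-p)}\to0$), but it is more than you need: since the scalar infimum defining $V_\lambda$ is attained at some $(a_0,b_0)$, either $b_0=0$ and $V_\lambda=1$, or $b_0>0$ and $V_\lambda>a_0^q+b_0^q\geq V_1=m(S)^q$; hence $m(S+K_\lambda)>m(S)$ already holds for any single fixed $\lambda>1$, which suffices to refute the CPPm.
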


\begin{proof}
We may assume $m(T)=1$. Otherwise, replace $T$ by $\frac{T}{m(T)}$ so that $m(T)=1$.
Define $S\in\mathcal{L}(X\oplus_p Z,\,Y\oplus_q Z)$ by
$S(x,z)=(Tx,z)$. For $(x,z)\in X\oplus_p Z$, write $a=\|x\|$, $b=\|z\|$ so that
$a^p+b^p=1$ when $\|(x,z)\|_p=1$. Since $m(T)=1$, we have $\|Tx\|\ge a$. Hence
$$m(S)=\inf_{\|(x,z)\|_p=1}\|(Tx,z)\|_q
\geq\inf_{a^p+b^p=1}\left(a^q+b^q\right)^{\frac{1}{q}}.$$
Let $\lambda:=a^p\in[0,1]$. Then $a=\lambda^{\frac{1}{p}}$, $b=(1-\lambda)^{\frac{1}{p}}$ and
$$\left(a^q+b^q\right)^{\frac{1}{q}}
=\left(\lambda^{\frac{q}{p}}+(1-\lambda)^{\frac{q}{p}}\right)^{\frac{1}{q}}.$$
Since $q>p$, the function $\lambda\mapsto\lambda^{\frac{q}{p}}+(1-\lambda)^{\frac{q}{p}}$ is minimized
at $\lambda=\frac{1}{2}$, giving
$$\left(2\cdot 2^{-\frac{q}{p}}\right)^{\frac{1}{q}}=2^{\frac{p-q}{pq}}\leq m(S).$$
On the other hand, for a sequence $(x_n)_n$ in $S_X$ such that $\|Tx_n\|\to 1$, and an element $z\in S_Z$ consider a sequence $\left(2^{-\frac{1}{p}}x_n,2^{-\frac{1}{p}}z\right)_n$ of unit vectors. Then we have
$$\left\|S\left(\left(2^{-\frac{1}{p}}x_n,2^{-\frac{1}{p}}z\right)\right)\right\|_q=\left\|\left(2^{-\frac{1}{p}}Tx_n,2^{-\frac{1}{p}}z\right)\right\|_q=\left(2^{-\frac{q}{p}}\|Tx_n\|^q+2^{-\frac{q}{p}}\right)^{\frac{1}{q}}\to 2^{\frac{p-q}{pq}}.
$$
Thus $m(S)=2^{\frac{p-q}{pq}}<1=m(T)$.
Moreover, from above, we see that if $S$ attains its minimum modulus at some $(x,z)$ with $\|(x,z)\|_p=1$, then necessarily
$\|Tx\|=\|x\|$, which yields a unit vector $u=\frac{x}{\|x\|}$ with $\|Tu\|=1$.
This contradicts the assumption that $T$ does not attain $m(T)$.
Hence $S$ does not attain its minimum modulus.

Finally, note that $S=(T,0)+(0,\mathrm{id}_Z)$.
Therefore, if the pair $(X\oplus_p Z,\,Y\oplus_q Z)$ had the CPPm, then $S$ would attain its minimum modulus, which contradicts the conclusion of the previous paragraph.
Therefore, the pair fails the CPPm.
\end{proof}

\begin{example}\label{psums}
For $1\leq p<q<\infty$ and $1\leq r\leq s<\infty$, the pair $(\ell_r\oplus_p\mathbb{R}, \ell_s\oplus_q\mathbb{R})$ does not have the CPPm.
\end{example}

By \cite[Theorem 17]{HK}, for real numbers $1<p<q<\infty$ and $1<s<\infty$, the pair $(\ell_s\oplus_p\mathbb{R},\ell_s\oplus_q\mathbb{R})$ has property $(M)$.
In particular, it satisfies the WMP.
However, by Example \ref{psums}, this pair fails the CPPm. 
Consequently, it fails both property $(m)$ and the WmP.

We have now obtained nearly complete information on which pairs of separable classical sequence spaces satisfy the WmP or the CPPm, and which do not.
Nevertheless, despite our efforts, we were unable to decide whether the pair $(c_0,c_0)$ has the CPPm, and we conclude the paper by posing this as an open problem.

\begin{question}
    Does the pair $(c_0,c_0)$ have the CPPm?
\end{question}

\section*{Acknowledgements}

The author would like to thank Sun Kwang Kim for helpful discussions and valuable comments.

\subsection*{Funding}
The author was supported by the National Research Foundation of Korea(NRF) grant funded by the Korea government(MSIT) [NRF-2020R1C1C1A01012267].

\end{document}